\def\eps{\varepsilon}
\def\flow{\varphi}
\newcommand{\const}[1]{c^{\scriptscriptstyle(#1)}}
\newcommand{\tildconst}[1]{\tilde c^{\scriptscriptstyle(#1)}}
\def\R{\mathbf{R}}
\def\N{\mathbf{N}}
\def\dx{\mathrm{d}}
\newcommand{\Lipschitz}{\operatorname{Lip}}
\def\Lip{\Lipschitz_0}
\newcommand{\diam}{\operatorname{diam}}
\newcommand{\id}{\operatorname{id}}
\newcommand{\rref}[1]{(\ref{#1})}
\newcommand{\notag}{\nonumber}
\newtheorem{theorem}{Theorem}[section]
\newtheorem{lemma}{Lemma}[section]
\newcommand{\fracc}[2]{{#1}/{(#2)}}
\newcommand{\fraca}[2]{{#1}/{#2}}
\newcommand{\fracb}[2]{{(#1)}/{#2}}
\begin{document}
\begin{frontmatter}

\title{Asymptotic support theorem for planar isotropic Brownian flows\thanksref{TT1}}
\runtitle{Asymptotic support theorem for planar IBF}

\begin{aug}
\author[A]{\fnms{Moritz} \snm{Biskamp}\corref{}\ead[label=e1]{biskamp@math.tu-berlin.de}}
\runauthor{M. Biskamp}
\affiliation{Technische Universit\"at Berlin}
\address[A]{Technische Universit\"at Berlin\\
Institut f\"ur Mathematik, MA 7-4\\
Stra{\ss}e des 17. Juni 136\\
10623 Berlin\\
Germany\\
\printead{e1}} 
\end{aug}
\thankstext{TT1}{Supported by the International Research Training
Group \textit{Stochastic Models of Complex Processes} funded by the German
Research Council (DFG).}

\received{\smonth{7} \syear{2010}}
\revised{\smonth{6} \syear{2011}}

%
\begin{abstract}
It has been shown by various authors that the diameter of a given
nontrivial bounded connected set $\mathcal{X}$
grows linearly in time under the action of an isotropic Brownian flow
(IBF), which has a nonnegative top-Lyapunov exponent.
In case of a planar IBF with a positive top-Lyapunov exponent, the
precise deterministic linear growth rate $K$ of the diameter
is known to exist. In this paper we will extend this result to an
asymptotic support theorem for the time-scaled trajectories of
a planar IBF $\varphi$, which has a positive top-Lyapunov exponent,
starting in a nontrivial compact connected set
$\mathcal{X} \subseteq\R^2$; that is, we will show convergence in
probability of the set of time-scaled trajectories
in the Hausdorff distance to the set of Lipschitz continuous functions
on $[0,1]$ starting in $0$ with Lipschitz constant $K$.
\end{abstract}

%
\begin{keyword}[class=AMS]
\kwd[Primary ]{60G17}
\kwd{37C10}
\kwd[; secondary ]{60G15}
\kwd{37H10}.
\end{keyword}
\begin{keyword}
\kwd{Stochastic flows}
\kwd{isotropic Brownian flows}
\kwd{asymptotic expansion}
\kwd{asymptotic support theorem}.
\end{keyword}

\end{frontmatter}

\section{Introduction}

Isotropic Brownian flows (IBFs) are a fairly natural class of
stochastic flows and were first introduced by It\^o~\cite{ito56} and
Yaglom~\cite{yag57}. For this class of stochastic flows, the image of
a single point is a Brownian motion, and the covariance tensor between
two different Brownian motions is an isotropic function of their
positions. IBFs, and in particular their local structure, have been
extensively studied in the 1980s by~\cite{jan85} and~\cite{bax86},
among others.

The study of the global behavior of stochastic flows was stimulated by
Carmona's conjecture~\cite{cc99}, Section 5.2., that the diameter of
the image of a compact set could expand linearly in time, but not
faster. For stochastic flows this conjecture was proved by Cranston,
Scheutzow and Steinsaltz~\cite{css00} and improved by Lisei and
Scheutzow~\cite{ls01} as well as by Scheutzow~\cite{sch09}. Even
more surprising than this upper bound is maybe the existence of points
that move with linear speed, although each individual point as a
diffusion grows on average, like the square-root of the time. This
lower bound was proved first for IBFs, which have a strictly positive
top-Lyapunov exponent, by Cranston, Scheutzow and Steinsaltz \cite
{css99} and under more general conditions by Scheutzow and Steinsaltz
\cite{ss02}. Nevertheless, upper and lower bounds for the linear
growth turn out to be far from each other in some examples. In the case
of planar periodic stochastic flows (stochastic flows on the torus)
Dolgopyat, Kaloshin and Koralov~\cite{dkk04} used a new approach
based on the so-called \textit{stable norm}, to identify the precise
deterministic linear growth rate of such flows. By this approach, van
Bargen~\cite{vB10} identified the precise deterministic growth rate
for planar IBFs, which have a strictly positive top-Lyapunov exponent.

Not only has the linear growth rate been analyzed in the last years,
but also the behavior of the individual trajectories of stochastic
flows. Scheutzow and Steinsaltz~\cite{ss02} investigated so-called
\textit{ball-chasing} properties of the flow, which is the existence
of a
trajectory that follows a given Lipschitz path in a logarithmic
neighborhood~\cite{ss02}, Theorem 4.2, where the Lipschitz constant
is basically the lower bound of linear growth mentioned in the previous
paragraph.

Here we are looking at the individual trajectories of a planar IBF, or,
to be more precise, at the linear time-scaled versions. Getting a
better understanding of these trajectories yields a deeper
understanding of the expansion of nontrivial bounded connected sets
under the action of an IBF. In this paper we will show convergence in
probability of the set of time-scaled trajectories in the Hausdorff
distance to the set of Lipschitz continuous functions starting in $0$
with Lipschitz constant $K$, which is the deterministic growth rate for
a planar IBF mentioned above. Roughly speaking we will show the following:
On the one hand, for any time-scaled trajectory, there exists a
Lipschitz function with Lipschitz constant $K$ starting in $0$ such
that this function is close to the time-scaled trajectory. This yields
an upper bound on the speed of the trajectories. Hence we will call
this inclusion the \textit{upper bound}.
On the other hand we show that for any given Lipschitz function with
Lipschitz constant $K$ starting in $0$, there exists a trajectory
that approximates this Lipschitz function. This gives a lower bound on
the maximum speed of the trajectories. Thus we will refer to this
inclusion as the \textit{lower bound}.
As far as the author knows such a complete characterization of the
asymptotic behavior of the~trajectories of stochastic flows is a
novelty in the present context and hence yields a new and deeper
understanding of the expansion of nontrivial bounded connected sets
under the action of IBFs.

The paper is organized as follows: In Section~\ref{sec:IBF} we first
introduce the notion of stochastic flows, and in particular of IBFs and
some of their main properties used within this paper. After stating the
main theorem in Section~\ref{sec:mainthm}, we first introduce the
notion of \textit{stable norm} in Section~\ref{sec:stablenorm}. The proof
of the main theorem is divided into the proof of the upper bound
(Section~\ref{sec:upperBound}) and the lower bound (Section \ref
{sec:lowerBound}).

\section{Preliminaries}
\subsection{Isotropic Brownian flows} \label{sec:IBF}
We provide a short introduction to isotropic Brownian flows (IBF)
following mainly~\cite{bax09}.

A \textit{stochastic flow of homeomorphisms} on $\R^d$ is a family of
random homeomorphisms $\{\flow_{s,t}\dvtx s, t \in\R_+\}$ of $\R^d$,
which almost surely satisfies the flow property, that is, $\flow_{s,t}
= \flow_{u,t} \circ\flow_{s,u}$ for all $s,t,u \in\R_+$, and
$\flow_{t,t} =\id|_{\R^d}$ for all $t \in\R_+$, and is jointly
continuous; that is, $(s,t,x) \mapsto\flow_{s,t}(x)$ is continuous.
The flow is called a \textit{Brownian flow} if the increments $\flow
_{s,t}$ on disjoint intervals are independent and time homogeneous.

Due to~\cite{kun90}, Theorem 4.2.8, under suitable regularity
conditions, Brownian flows of homeomorphisms can be realized as
solutions of Kunita-type stochastic differential equations
\begin{eqnarray*}
\flow_{s,t}(x) = x + \int_s^t M(\dx u, \flow_{s,u}(x)) + \int_s^t
v(\flow_{s,u}(x)) \,\dx u, \qquad s \leq t,
\end{eqnarray*}
where $v\dvtx \R^d \to\R^d$ is a vector field, and $M\dvtx \R_+
\times\R
^d \times\Omega\to\R^d$ is a mean-zero Gaussian martingale field on
a complete probability space $(\Omega,\mathcal{F},\mathbf{P})$. $M$
is called the generating \textit{Brownian field} and its distribution is
determined by the covariances
\begin{eqnarray*}
\mathbf{E}[\langle M(t,x),\xi\rangle\langle M(s,y),\eta\rangle] = (s
\wedge t)\langle b(x,y)\xi,\eta\rangle,\qquad\xi,\eta\in\R^d,
\end{eqnarray*}
where $b\dvtx \R^d \times\R^d \to\R^{d \times d}$ is a covariance
tensor. The distribution of the flow $\{\flow_{s,t}\dvtx s,t\in\R_+\}
$ is
determined by the functions $b(x,y)$ and $v(x)$. If there is no risk of
ambiguity, we will abbreviate $\flow_{0,t}(x)$ by $\flow_t(x)$. Due
to the independent increments and the flow property, a Brownian flow
satisfies, according to~\cite{kun90}, Theorem 4.2.1, a Markov
property in the following sense: Let $\mathcal{F}_{s,t}$ be the least
sub $\sigma$-algebra of $\mathcal{F}$ containing all null sets and
$\bigcap_{\eps> 0}\{\flow_{u,r} \dvtx s - \eps\leq u,r\leq t+\eps
\}$.
Then for $0 \leq s < t < u$, $n \in\N$ and $x_1,\ldots, x_n \in\R
^d$, we have
%
%
\begin{eqnarray} \label{eq:markovprop}
&&\mathbf{P}\bigl( (\flow_{s,u}(x_1),\ldots,\flow_{s,u}(x_n)
)\in E | \mathcal{F}_{s,t}\bigr)\nonumber
\\[-8pt]
\\[-8pt]
&& \qquad = \mathbf{P}\bigl( (\flow_{t,u}(y_1),\ldots,\flow
_{t,u}(y_n) )\in E\bigr) |_{y_i=\flow_{s,t}(x_i)},
\nonumber
\end{eqnarray}
where $E$ is a Borel sets in $\R^{nd}$.

An \textit{isotropic Brownian flow} on $\R^d$ is a Brownian flow of
homeomorphisms of $\R^d$, where the distribution of each $\flow
_{s,t}$ is invariant under rigid transformations of $\R^d$. The
invariance in distribution of $\flow_{s,t}$ under rigid motions
implies the invariance in distribution of the generating Brownian field
$M(t,x)$; in this case $M(t,x)$ is said to be an \textit{isotropic
Brownian field}. The invariance under translations implies that $b(x,y)
= b(x-y,0) \equiv b(x-y)$, and then the invariance under rotations and
reflections implies that
%
%
\begin{eqnarray} \label{eq:isotropy}
b(x) = O^T b(Ox)O
\end{eqnarray}
for all orthogonal matrices $O$ on $\R^d$. Moreover we have $v(x)
\equiv0$. In this paper we will assume $b \in C^\infty$, since we
will use results of~\cite{vB10}, where smoothness of $b$ has to be
assumed. In this case $\flow_{s,t}(\cdot) \in C^\infty(\R^d)$ are
diffeomorphims. Furthermore, the isotropy property \rref{eq:isotropy}
implies that $b(0) = c\id|_{\R^d}$ for some constant $c>0$. At the
cost of rescaling time by a constant factor, we can and will assume
that $b(0) = \id|_{\R^d}$. In order to avoid the trivial case where
the flow consists of translations, we assume also that $b(x) \not
\equiv\id|_{\R^d}$. Since the properties of the flow we are
interested in do not depend on rigid translations of the space by a
Brownian motion added to the generated IBF, we can and will assume that
$\lim_{| x|\to\infty}b(x) = 0$.

According to~\cite{yag57}, Section 4 (and as described in \cite
{bax86}), a covariance tensor with the above properties can be written
in the form
\begin{eqnarray*}
b_{ij}(x) =
\cases{\displaystyle
\bigl(B_L(| x|) - B_N(| x|)\bigr) \frac{x_ix_j}{| x|^2} + \delta
_{ij}B_N(| x|)
,&\quad if $x \neq0$,\cr\displaystyle
\delta_{ij} ,&\quad if $x = 0$
}
\end{eqnarray*}
for $i,j = 1, \ldots, d$, where $B_L$ and $B_N$ are the so-called
longitudinal and transverse (normal) covariance functions defined by
\begin{eqnarray*}
B_L(r) := b_{ii}(re_i), \qquad B_N(r) := b_{ii}(re_j)
\end{eqnarray*}
for $r \geq0$ and $i \neq j$, where $e_i$ denotes the $i$th unit
vector in $\R^d$. For future reference define
\begin{eqnarray*}
\beta_L := -B''_L(0) > 0, \qquad\beta_N := -B''_N(0) > 0,
\end{eqnarray*}
to be the negative second right-hand derivative of the longitudinal and
respectively transverse covariance function. In Lemma~\ref{lem:kappa}
we will give an estimate of the longitudinal and transverse covariance
functions in terms of $\beta_L$ and $\beta_N$, respectively.

Lyapunov exponents can be defined for dynamical systems and
characterize the exponential rate of separation of infinitesimally
close trajectories. Baxendale and Harris~\cite{bax86} have shown
under the assumptions mentioned above, that IBFs have Lyapunov
exponents, which satisfy
\begin{eqnarray*}
\mu_i = \tfrac{1}{2} \bigl((d-i)\beta_N - i\beta_L \bigr), \qquad
i=1, \ldots, d.
\end{eqnarray*}
The top-Lyapunov exponent $\mu_1$, and more precisely its sign,
crucially affects the asymptotic behavior of the flow. As shown in
\cite{css99} and~\cite{ss02}, a nonnegative top Lyapunov exponent
$\mu_1 \geq0$ implies that any nontrivial bounded set (a set is said
to be \textit{nontrivial} if it is connected and contains more than one
point) does not contract to a single point under the action of the
flow. On the other hand,\vadjust{\goodbreak} if $\mu_1 < 0$, then according to \cite
{ss02} there is a positive probability that a small set contracts to a
single point, and hence our result cannot be true. By this remark, and
since we would like to use results from~\cite{vB10}, we always will
assume a strictly positive top-Lyapunov exponent. But we conjecture
that the results in~\cite{vB10}, and hence our main result, are also
true for $\mu_1 = 0$.
For more details on Lyapunov exponents for random dynamical systems we
refer to~\cite{arn98}.

If the flow $\flow$ is restricted to $\{(s,t)\in\R_+ \times\R_+
\dvtx
s \leq t\}$, it is called the \textit{forward flow}, whereas if restricted
to $\{(s,t)\in\R_+ \times\R_+ \dvtx s \geq t\}$, it is called the
\textit{backward flow}. In Kunita~\cite{kun90}, Theorem 4.2.10, the
generating Brownian field of the backward Brownian flow has been
calculated. If the flow is isotropic it turns out that it is in fact
equal to the generating Brownian field of the forward Brownian flow;
see~\cite{bax86}, (3.7). This implies that for fixed $T>0$, we have
%
%
\begin{eqnarray} \label{eq:timereversal}
\mathcal{L}[\flow_{s,t}(\cdot) \dvtx 0 \leq s \leq t \leq T] =
\mathcal
{L}[\flow_{T-s,T-t}(\cdot) \dvtx 0 \leq s \leq t \leq T],
\end{eqnarray}
the so-called \textit{time reversal} property of IBFs.

\subsection{Time-scaled trajectories} \label{sec:trajectories}

Let $\mathcal{X} \subseteq\R^d$ be compact, and denote the \textit{set
of time-scaled trajectories of the flow starting in $\mathcal{X}$} up
to some time $T > 0$ by
\begin{eqnarray*}
F_T(\mathcal{X},\omega) :=\bigcup_{x \in\mathcal{X}} \biggl\{[0,1]
\ni t \mapsto\frac{1}{T}\flow_{0,tT}(x,\omega) \biggr\}
\end{eqnarray*}
for $\omega\in\Omega$. Since $\mathcal{X}$ is compact, and $(x,t)
\mapsto\flow_{0,t}(x)$ is continuous, we have that $F_T(\mathcal
{X})$ is a compact subset of the continuous functions on $[0,1]$ with
respect to the supremum norm $\|\cdot\|_\infty$. Further denote by
$\Lip(K)$ the set of Lipschitz continuous functions $f$ on $[0,1]$
with $f(0) = 0$ and Lipschitz constant $K$, which is as well a compact
set with respect to $\|\cdot\|_\infty$. The Hausdorff distance
between two nonempty compact sets $A$ and $B$ of a metric space is
defined by
\begin{eqnarray*}
d_H(A,B) := \max\Bigl\{\sup_{x\in A} d(x,B) ; \sup_{y \in B} d(y,
A) \Bigr\},
\end{eqnarray*}
where $d$ denotes the metric. Since $F_T(\mathcal{X})$ and $\Lip(K)$
are compact subsets of $C([0,1],\|\cdot\|_\infty)$, the function
\begin{eqnarray*}
(T,\omega) \mapsto d_H(F_T(\mathcal{X},\omega), \Lip(K))
\end{eqnarray*}
is well defined.

\section{Main theorem} \label{sec:mainthm}

From here on we will consider the case of planar IBFs; that is, the
dimension of the space will be $d =2$. Given a planar IBF $\flow$,
which has a strictly positive top-Lyapunov exponent, our main result
is: For any nontrivial compact connected set $\mathcal{X} \subseteq
\R^2$, we have convergence in probability of $d_H(F_T(\mathcal{X}),
\Lip(K))$ to $0$ for $T \to\infty$, that is, the following theorem.\vadjust{\goodbreak}

%
\begin{theorem} \label{thm:mainthm}
Let $\flow$ be a planar IBF, which has a strictly positive
top-Lyapunov exponent. Then there exists a deterministic constant $K >
0$ such that for any $\eps> 0$ and any nontrivial compact and
connected set $\mathcal{X} \subseteq\R^2$, we have
\begin{eqnarray*}
\lim_{T \to\infty} \mathbf{P}\bigl(d_H(F_T(\mathcal{X}), \Lip
(K)) >
\eps\bigr) = 0,
\end{eqnarray*}
where $d_H$ denotes the Hausdorff distance, $F_T(\mathcal{X})$ the set
of time-scaled trajectories (see Section~\ref{sec:trajectories}) and
$\Lip(K)$ the set of Lipschitz continuous functions on $[0,1]$
starting in $0$ with Lipschitz constant $K$.
\end{theorem}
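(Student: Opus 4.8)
The plan is to prove the theorem by establishing two one-sided inclusions up to small error, corresponding to the "upper bound" and "lower bound" mentioned in the introduction, and to identify $K$ with the deterministic linear growth rate of the diameter obtained in \citep{vB10} via the stable norm. Concretely, fix $\eps > 0$. I will show (i) $\sup_{f \in F_T(\mathcal{X})} d(f, \Lip(K)) \le \eps$ with probability tending to $1$, and (ii) $\sup_{g \in \Lip(K)} d(g, F_T(\mathcal{X})) \le \eps$ with probability tending to $1$. Together these give $d_H(F_T(\mathcal{X}), \Lip(K)) \le \eps$ with probability tending to $1$, which is the claim.

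For part (i) — every scaled trajectory is close to \emph{some} $K$-Lipschitz function through $0$ — the key point is a uniform bound on the local speed of motion. Since each $t \mapsto \flow_{0,t}(x)$ is (after subtracting the translational Brownian motion, which we have normalized away) a process whose spatial spread is governed by $b$, the heart of the matter is to show that for every $\delta > 0$,
\begin{align*}
\lim_{T\to\infty}\prob{\sup_{x \in \mathcal{X}}\ \sup_{0 \le s < t \le 1}\frac{\abs{\flow_{0,tT}(x) - \flow_{0,sT}(x)}}{T(t-s)} > K + \delta} = 0.
\end{align*}
Given such a bound, on the corresponding event each scaled trajectory $t \mapsto \tfrac1T\flow_{0,tT}(x)$ is itself $(K+\delta)$-Lipschitz, and a path $g$ that starts at $0$, runs along $-\tfrac1T\flow_{0,0}(x)$ direction for a negligibly short initial time and then tracks the trajectory, can be taken in $\Lip(K)$ within $O(\diam\mathcal{X}/T) + O(\delta)$ in sup-norm; choosing $\delta$ small and $T$ large finishes (i). The speed bound is exactly the regime where the upper-bound results of \citep{css00,ls01,sch09} on linear growth of the diameter, refined through the stable-norm identification of the optimal constant $K$ in \citep{vB10}, apply: the diameter grows at rate exactly $K$, and a trajectory that moved faster than $K+\delta$ over a macroscopic time window would, together with the (separately controlled) fact that nearby points stay within a sublinear distance of each other on the relevant scale, force the diameter of $\flow_{0,\cdot T}(\mathcal{X})$ to exceed $(K+\delta')T$, contradicting the upper bound. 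Making this reduction rigorous, and in particular getting the supremum over all pairs $(s,t)$ and over $x \in \mathcal{X}$ simultaneously rather than for fixed endpoints, is the main technical content of Section \ref{sec:lowerBound}.

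For part (ii) — every $K$-Lipschitz function through $0$ is approximated by \emph{some} scaled trajectory — I would first reduce to a finite $\eps$-net of $\Lip(K)$ (compact in $\norminf{\cdot}$), so it suffices to approximate finitely many fixed piecewise-linear $K$-Lipschitz paths $g_1,\dots,g_N$, each within $\eps$, simultaneously with probability tending to $1$. For a single piecewise-linear target $g$ with slopes of norm $\le K$ on successive subintervals, the construction proceeds segment by segment: on each subinterval one wants the flow, started from the (random) current location, to transport \emph{some} point of the current image roughly a distance $K \cdot (\text{length})$ in the prescribed direction. This is precisely where the stable norm enters: the stable-norm machinery of \citep{dkk04,vB10} produces, with high probability and at the right time scale, a point in the image whose displacement realizes speed arbitrarily close to $K$ in any prescribed direction; using the Markov property \rref{eq:markovprop} at the (stopping) times marking the segment boundaries, one chains these segments together. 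The ball-chasing theorem of \citep[Theorem 4.2]{ss02} is the conceptual ancestor of this step, but here we need the \emph{sharp} constant $K$ rather than merely the lower bound it provides, which is why the stable-norm analysis of Section \ref{sec:stablenorm} is invoked. Handling the finitely many targets at once is then a union bound.

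The main obstacle, and the place I expect the real work to be, is part (ii): extracting from the stable-norm formalism a \emph{quantitative, directional, finite-time} statement — "with probability $\ge 1-\eta$, within time $\tau T$ the image of $\mathcal{X}$ contains a point displaced by at least $(K-\eta)\tau T$ in a prescribed direction" — uniformly in the direction and compatibly with the Markov/flow structure so that the segments of the piecewise-linear path can be concatenated without the error compounding. The stable norm is an asymptotic, almost-sure object (a limit of $\tfrac1T$ times a first-passage-type quantity), so converting it into the needed non-asymptotic high-probability bound, and in particular controlling the dependence between successive segments via the time-reversal property \rref{eq:timereversal} and the Markov property when one conditions on the random configuration reached so far, is the delicate part. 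Part (i), by contrast, should follow fairly directly from the existing linear-growth upper bounds once the correct constant $K$ is in hand.
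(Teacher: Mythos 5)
The estimate on which you base part (i) is false, and this is the genuine gap. Since $b(0)=\id\!|_{\R^2}$, each one-point motion $t\mapsto\flow_{0,t}(x)$ is a standard planar Brownian motion, and Brownian paths are not Lipschitz on any interval; hence for every fixed $T$ the quantity $\sup_{x\in\mathcal{X}}\sup_{0\le s<t\le1}\abs{\flow_{0,tT}(x)-\flow_{0,sT}(x)}/(T(t-s))$ is almost surely infinite (let $t-s\downarrow 0$), so the probability you propose to send to $0$ is identically $1$. The scaled trajectories are never $(K+\delta)$-Lipschitz; they are only close to $\Lip(K)$ in sup-norm. The statement must be weakened to a discretized form, which is what the paper does: fix a partition of mesh $\Delta t$, prove Lipschitz-type bounds only for the increments across grid points, control the oscillation inside each grid cell by an \emph{additive} $\eps/3$ via chaining (Lemma~\ref{lem:chaining}), and then exhibit an explicit element of $\Lip(K)$ by linearly interpolating the slightly shrunk grid values (Lemma~\ref{lem:triineq}). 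Moreover, even on the grid your reduction to the known linear-growth upper bounds does not suffice: \rref{thm:holger1} and \rref{cor:upperbound} control the position relative to the initial set, not the increment $\flow_{t_{i+1}T}(x)-\flow_{t_iT}(x)$ from the random position reached at time $t_iT$, which can lie anywhere in a ball of radius of order $T$; and a large intermediate increment does not force a point far from the origin, so no contradiction with the diameter bound arises. What is needed is a hitting-time/stable-norm estimate uniform over starting sets of \emph{linearly growing} size, which is precisely the new content of Lemmas~\ref{lem:convBepsT} and~\ref{lem:growingBepsT} (together with spatial homogeneity, isotropy and a covering by balls $B_{\tilde\eps T}(v_jT)$), and is not a routine consequence of Lemma~\ref{lem:holger2}. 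So, contrary to your assessment, part (i) is where substantial new work is required.

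Your part (ii) is in the right spirit and essentially matches the paper's route (compactness reduction to a finite net, segment-by-segment tracking via stable-norm hitting times and the Markov property \rref{eq:markovprop}; the quantitative directional input you worry about is exactly the convergence in probability of Lemma~\ref{lem:holger2}). Two ingredients you omit are, however, needed for the concatenation not to degenerate: the tracked portion of the image must be kept large (diameter at least $1$, Lemma~\ref{lem:diameter}) so that the uniform hitting estimates apply to the next segment, and the hit point must be forced to linger within $T^{2/3}$ of $Tf(t_i)$ until time $t_iT$ (a Brownian one-point estimate), so that the next segment starts from a set in $\mathcal{C}_R$ near the target. Finally, the theorem concerns arbitrary non-trivial compact connected $\mathcal{X}$, while all the stable-norm machinery requires $\diam(\mathcal{X})\ge 1$; the paper's last step reduces to the large-set case by the diffusive rescaling $\tilde\flow_{s,t}(\tilde x)=\frac1r\flow_{r^2s,r^2t}(r\tilde x)$ with $r=\diam(\mathcal{X})$, and some such reduction is missing from your proposal.
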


The theorem will be proved in Section~\ref{sec:proofOfThm}.

\section{Stable norm} \label{sec:stablenorm}
The concept of stable norm presented in this section traces back to
Dolgopyat, Kaloshin and Koralov~\cite{dkk04}, where they considered
planar periodic stochastic flows.

Denote by $B_r(w)$ the closed ball in $\R^2$ of radius $r$ around $w
\in\R^2$. For any $R \geq1$, let $\mathcal{C}_R$ be the set of all
connected compact large subsets of $\R^2$ fully contained in
$B_{2R}(0)$, where a set is called \textit{large} if its diameter is
greater or equal than $1$. For $v \in\R^2$, $\mathcal{X}
\subseteq\R^2$ and $s \geq0$, define the stopping time
\begin{eqnarray*}
\tau^R(\mathcal{X},v,s) := \inf\{t \geq0 \dvtx \flow
_{s,s+t}(\mathcal{X}) \cap B_R(v) \neq\varnothing; \diam(\flow
_{s,s+t}(\mathcal{X}))\geq1 \},
\end{eqnarray*}
which is the first time when, starting at time $s$, the initial set
$\mathcal{X}$ under the action of the flow hits an $R$-neighborhood of
$v$ as a large set. For $s=0$, we will abbreviate in the following:
$\tau^R(\mathcal{X},v,0)$ by $\tau^R(\mathcal{X},v)$. By temporal
homogeneity of the flow, the laws of $\tau^R(\mathcal{X},v,s)$ and
$\tau^R(\mathcal{X},v)$ coincide. If only the distribution matters,
we will use $\tau^R(\mathcal{X},v)$. Then it is known from~\cite
{vB10} that for $v \in\R^2$ the following limit (uniformly in
$\mathcal{X} \in\mathcal{C}_R$) exists:
\begin{eqnarray*}
\|v\|^R := \lim_{t \to\infty} \frac{1}{t}\sup_{\gamma\in\mathcal
{C}_R} \mathbf{E}[\tau^R(\gamma,vt)] = \lim_{t \to\infty} \frac
{1}{t} \mathbf{E}[\tau^R(\mathcal{X},vt)].
\end{eqnarray*}
This limit is called the \textit{stable norm} of $v$. Further it is known
that $\|\cdot\|^R$ does not depend on the precise choice of $R \geq
1$, and it is indeed a norm on $\R^2$; see~\cite{vB10}, Section
3.2.2. Hence for the sequel, fix some arbitrary $R \geq1$. If
we denote the closed unit ball in $\R^2$ with respect to $\|\cdot\|
^R$ by $\mathcal{B}$, then, as shown by van Bargen~\cite{vB10},
Theorem 2.1, for any $\eps> 0$ and any nontrivial bounded connected
$\mathcal{X} \subseteq\R^2$,
%
%
\begin{eqnarray} \label{thm:holger1}
\lim_{T \to\infty}\mathbf{P}\biggl((1-\eps)T\mathcal{B}
\subseteq
\bigcup_{x \in\mathcal{X}} \bigcup_{0 \leq t \leq T} \flow_t(x)
\subseteq(1+\eps)T\mathcal{B}\biggr) = 1.
\end{eqnarray}
For our purpose this immediately implies that for $\eps> 0$ and $t \in
(0,1]$, we have
%
%
\begin{eqnarray} \label{cor:upperbound}
\lim_{T\to\infty}\mathbf{P}\bigl(\flow_{t T}(\mathcal{X})
\subseteq t
T (1 + \eps) \mathcal{B}\bigr) = 1.
\end{eqnarray}
Since the flow is isotropic, $\mathcal{B}$ is a ball in $\R^2$ with
(Euclidean) radius $K$, that is, $K = 1/\|e_1\|^R > 0$. This
deterministic constant $K$ is the Lipschitz constant in Theorem~\ref{thm:mainthm}.

In the sequel we will need the following lemma from~\cite{vB10} on
convergence in probability of the time-scaled hitting time to the
stable norm.

%
\begin{lemma} \label{lem:holger2}
For any $\eps> 0$ and $v \in\R^2$, we have
\begin{eqnarray*}
\lim_{T \to\infty} \sup_{\gamma\in\mathcal{C}_R} \mathbf
{P}\biggl(\biggl|
\frac{\tau^R(\gamma, Tv)}{T} - \|v\|^R\biggr| > \eps\biggr) = 0.
\end{eqnarray*}
Moreover for any $m \in\N$, there exists a constant $\const{1}_m$
such that
\begin{eqnarray*}
\sup_{\gamma\in\mathcal{C}_R} \mathbf{P}\bigl(\tau^R(\gamma,
Tv) >
(\|v\|^R + \eps)T\bigr) \leq\const{1}_m T^{-m}.
\end{eqnarray*}
\end{lemma}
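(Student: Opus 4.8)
The plan is to reduce both assertions to a single chaining estimate. The first (the convergence in probability) is in essence already in \citep{vB10}; the argument below, based on decomposing the hitting time into conditionally i.i.d.\ blocks, recovers it and additionally yields the polynomial tail in the ``in particular'' part. Fix $v\in\R^2$ and a block length $L\ge1$ (to be chosen large). Starting from $\gamma\in\mathcal C_R$, run the flow until its image first meets $B_R(Lv)$ as a set of diameter $\ge1$; this costs time $\tau^R(\gamma,Lv)$. At that stopping time $\flow_{0,\tau^R(\gamma,Lv)}(\gamma)$ is a continuum of diameter $\ge1$ meeting $B_R(Lv)$, hence contains a subcontinuum $\gamma_1$ of diameter in $[1,2]$ through a point of $B_R(Lv)$; since $R\ge1$ this forces $\gamma_1\subseteq B_{2R}(Lv)$, i.e.\ $\gamma_1-Lv\in\mathcal C_R$. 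By the strong Markov property \rref{eq:markovprop} together with the spatial homogeneity of the flow, conditionally on $\mathcal F_{0,\tau^R(\gamma,Lv)}$ the further time needed to drive $\gamma_1$ into $B_R(2Lv)$ as a large set is stochastically dominated, uniformly in $\gamma$, by $\sigma:=\sup_{\gamma'\in\mathcal C_R}\tau^R(\gamma',Lv)$. Iterating $M:=\lfloor T/L\rfloor$ blocks and appending one last block of displacement $\le L\abs v$ from $B_R(MLv)$ to $B_R(Tv)$ gives the stochastic domination
\begin{align*}
\tau^R(\gamma,Tv)\ \preceq\ \sigma_1+\dots+\sigma_M+\sigma',
\end{align*}
with $\sigma_1,\dots,\sigma_M$ i.i.d.\ copies of $\sigma$ and $\sigma'$ a further variable of the same type with displacement $\le L\abs v$; crucially the right-hand side does not depend on $\gamma$.

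The key input is that $\sigma$ and $\sigma'$ have finite moments of every order. This is where the strictly positive top-Lyapunov exponent enters, via \citep{vB10} (ultimately \citep{css99}): there are $t_0<\infty$ and $p_0>0$ with $\inf_{\gamma'\in\mathcal C_R}\prob{\tau^R(\gamma',Lv)\le t_0}\ge p_0$, because a large set sitting in $B_{2R}(0)$ spreads and hits a prescribed ball as a large set within a fixed time with probability bounded below, uniformly over the Hausdorff-compact family $\mathcal C_R$. Restarting the flow at multiples of $t_0$ via \rref{eq:markovprop} then yields $\prob{\sigma>nt_0}\le(1-p_0)^n$, an exponential tail; in particular $\E\sigma<\infty$, and since $\normR v=\lim_{t\to\infty}\tfrac1t\sup_{\gamma'\in\mathcal C_R}\E{\tau^R(\gamma',tv)}$ we may now fix $L$ so large that $\E\sigma\le(\normR v+\eps/2)L$. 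I expect this uniform lower bound on the one-block hitting probability to be the main obstacle: it is exactly the statement that an arbitrary large set in $B_{2R}(0)$ genuinely reaches a prescribed ball \emph{as a large set} in bounded time with non-negligible probability, and it is the ingredient I would import wholesale from \citep{vB10} rather than reprove.

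Granting this, the upper tail and the rate follow routinely. With $L$ fixed as above and $M=\lfloor T/L\rfloor$, one has $\E{\sigma_1+\dots+\sigma_M}\le(\normR v+\eps/2)T$ with variance $O(T)$, so $\tfrac1T(\sigma_1+\dots+\sigma_M)$ converges in probability to a constant $\le\normR v+\eps/2$ while $\tfrac1T\sigma'\to0$; hence $\prob{\tau^R(\gamma,Tv)/T>\normR v+\eps}\to0$, uniformly in $\gamma$. For the rate, Markov's inequality applied to the $2m$-th power together with Rosenthal's inequality gives $\E{\abs{\textstyle\sum_{k\le M}(\sigma_k-\E{\sigma_k})}^{2m}}\le\const{2}_m M^m$, whence
\begin{align*}
\prob{\tau^R(\gamma,Tv)>(\normR v+\eps)T}\ \le\ \prob{\abs{\textstyle\sum_{k\le M}(\sigma_k-\E{\sigma_k})}+\sigma'>\tfrac{\eps}{2}T}\ \le\ \const{1}_m\,T^{-m}
\end{align*}
for a suitable constant $\const{1}_m$, the bound claimed in the lemma.

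It remains to bound the lower tail, and here I use \rref{thm:holger1} (only its rightmost inclusion) rather than the chaining. Applying \rref{thm:holger1} to the fixed non-trivial connected set $B_{2R}(0)\supseteq\gamma$, with $T$ replaced by $S:=(\normR v-\eps)T$, we get that with probability tending to one $\bigcup_{x\in B_{2R}(0)}\bigcup_{0\le s\le S}\flow_s(x)\subseteq(1+\eps')S\mathcal B$. On this event, if $\tau^R(\gamma,Tv)<S$ then $B_R(Tv)$ meets $(1+\eps')S\mathcal B$; but $\mathcal B$ is the Euclidean ball of radius $K$ and $\normR\cdot=\abs\cdot/K$, so every point of $(1+\eps')S\mathcal B$ lies within Euclidean distance $(1+\eps')(\abs v-\eps K)T$ of the origin while $\abs{Tv}=\abs v\,T$, which for $\eps'$ small leaves a gap of order $T$ --- exceeding the fixed radius $R$ once $T$ is large, a contradiction. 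Thus $\prob{\tau^R(\gamma,Tv)<(\normR v-\eps)T}$ is at most the vanishing failure probability in \rref{thm:holger1}, uniformly in $\gamma$ (the enclosing set $B_{2R}(0)$ being the same for all $\gamma\in\mathcal C_R$). Combining this with the previous paragraph proves the lemma.
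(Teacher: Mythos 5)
The paper itself does not prove this lemma at all: it is imported verbatim from \citep{vB10} (Corollary 4.7 for the convergence in probability, (3.27) for the polynomial tail), so any real argument you give is necessarily "different". Your lower-tail half is sound: applying the upper inclusion of \rref{thm:holger1} to the fixed enclosing set $B_{2R}(0)$ with horizon $S=(\normR{v}-\eps)T$ does give a bound on $\prob{\tau^R(\gamma,Tv)<(\normR{v}-\eps)T}$ that is uniform in $\gamma\in\mathcal C_R$ (modulo the mild circularity that \rref{thm:holger1} is itself derived in \citep{vB10} from estimates of the type you are proving, which is acceptable since the present paper treats it as known). The upper-tail half, however, has genuine gaps exactly at the two points where \citep{vB10} does its real work.

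First, the geometric iteration $\prob{\sigma>nt_0}\le(1-p_0)^n$ does not follow from the one-step bound $\inf_{\gamma'\in\mathcal C_R}\prob{\tau^R(\gamma',Lv)\le t_0}\ge p_0$. If the first attempt fails, then at time $t_0$ the configuration $\flow_{t_0}(\gamma')$ is in general no longer (a translate of) an element of $\mathcal C_R$: it need not lie in $B_{2R}(0)$, it may have drifted arbitrarily far from the target $Lv$, and its diameter may have dropped below $1$. So restarting via \rref{eq:markovprop} does not let you reapply the same uniform bound; a bound uniform over all configurations arising at restart times is false for fixed $t_0$ (the probability of reaching $B_R(Lv)$ in time $t_0$ decays with the distance to the target), and controlling this is precisely the hard content of the moment/tail estimates in \citep{vB10}, which rely on the linear-growth results of \citep{css99,ss02} and chaining. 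Second, even granting all moments of the block variable, your choice of $L$ with $\E{\sigma}\le(\normR{v}+\eps/2)L$ is unjustified: the stable-norm limit controls $\sup_{\gamma'\in\mathcal C_R}\E{\tau^R(\gamma',Lv)}$, while your dominating variable (whether the pathwise supremum $\sigma$ or, as it should be for measurability and for the domination you actually use, a variable with tail $\sup_{\gamma'}\prob{\tau^R(\gamma',Lv)>t}$) has mean at least that supremum, with no upper bound in terms of $L\normR{v}$ available from what you have established; the tail you would get from the (flawed) iteration only yields something of order $t_0/p_0$, uncontrolled relative to $L\normR{v}$. Since both the law-of-large-numbers step and the Rosenthal/Markov step feed off these two claims, neither the upper-tail convergence nor the bound $\const{1}_mT^{-m}$ is established; as it stands the lemma still has to be quoted from \citep{vB10}.
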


\begin{pf}
See~\cite{vB10}, Corollary 4.7, and~\cite{vB10}, (3.27).
\end{pf}

The following lemma ensures that the diameter uniformly in $\gamma\in
\mathcal{C}_R$ under the action of the flow stays large after $\sqrt
{T}$ with high probability for $T$ large.

%
\begin{lemma} \label{lem:diameter}
For any $m \in\N$ there exists a constant $\const{2}_m$ such that
for $T$ large,
\begin{eqnarray*}
\sup_{\gamma\in\mathcal{C}_R} \mathbf{P}\Bigl(\inf_{s \geq\sqrt{T}}
\diam(\flow_s(\gamma)) < 1\Bigr) \leq\const{2}_mT^{-m}.
\end{eqnarray*}
\end{lemma}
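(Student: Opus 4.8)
The plan is to reduce the assertion to a linear lower bound for the diameter at a fixed time. Since $s\mapsto\diam(\flow_s(\gamma))$ is continuous, the event $\{\inf_{s\geq\sqrt T}\diam(\flow_s(\gamma))<1\}$ is contained in $\bigcup_{N\geq\lfloor\sqrt T\rfloor}\{\diam(\flow_s(\gamma))<1\text{ for some }s\in[N,N+1]\}$, so by a union bound it suffices to show that for each $m\in\N$ there is a constant $c_m$ with $\sup_{\gamma\in\mathcal C_R}\prob{\diam(\flow_s(\gamma))<1\text{ for some }s\in[N,N+1]}\leq c_m N^{-m}$ for all large $N$; summing $c_m N^{-m}$ over $N\geq\lfloor\sqrt T\rfloor$ then produces a bound of order $T^{-(m-1)/2}$, which gives the lemma after relabelling $m$.

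The core estimate is that for every $m$ there is $c_m$ with $\sup_{\gamma\in\mathcal C_R}\prob{\diam(\flow_N(\gamma))<\tfrac K8 N}\leq c_m N^{-m}$ for $N$ large. To get it I would apply Lemma \ref{lem:holger2} with $v=e_1$, using that $\normR{e_1}=1/K$: fixing $\eps$ small and choosing the target $\rho e_1$ with $\rho=\rho(N)\sim KN$ so that $(\normR{e_1}+\eps)\rho\leq N$, Lemma \ref{lem:holger2} gives that, off an event of probability at most $c_m N^{-m}$ uniformly in $\gamma\in\mathcal C_R$, the stopping time $\tau:=\tau^R(\gamma,\rho e_1)$ satisfies $\tau\leq N$. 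At time $\tau$ the set $\flow_\tau(\gamma)$ meets $B_R(\rho e_1)$ and hence contains a point $q=\flow_\tau(x)$, $x\in\gamma$, with $\abs{q}\geq\rho-R$. By the strong Markov property \rref{eq:markovprop} at $\tau$, the trajectory $r\mapsto\flow_{\tau,\tau+r}(q)$ is a Brownian motion started at $q$, so $\abs{\flow_{\tau,N}(q)-q}$ is of order $\sqrt N$ with Gaussian tails over the time $N-\tau\leq N$; since moreover $\flow_N(x)=\flow_{\tau,N}(q)\in\flow_N(\gamma)$ and $\flow_N(x_0)\in\flow_N(\gamma)$ for a fixed $x_0\in\gamma\subseteq B_{2R}(0)$, with $\abs{\flow_N(x_0)}$ again of order $\sqrt N$ with Gaussian tails, one concludes $\diam(\flow_N(\gamma))\geq\abs{\flow_{\tau,N}(q)}-\abs{\flow_N(x_0)}\geq\rho/2\geq\tfrac K8 N$ outside an event of probability $c_m N^{-m}$.

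Finally one passes from integer times to the interval $[N,N+1]$. On the event $\{\flow_N(\gamma)\subseteq B_{CN}(0)\}$, which fails with probability at most $c_m N^{-m}$ by the classical a priori dispersion estimates for the flow (or a rate version of \rref{cor:upperbound}), the diameter can decrease over $[N,N+1]$ by at most $2\Delta_N$, where $\Delta_N:=\sup\{\abs{\flow_{N,N+s}(y)-y}:\abs y\leq CN,\,0\leq s\leq 1\}$, and standard modulus-of-continuity estimates for the flow — covering $B_{CN}(0)$ by order $N^2$ unit balls — give $\prob{\Delta_N>\lambda}\leq CN^2e^{-c\lambda^2}$, so that $\Delta_N$ is of order $\sqrt{\log N}$ with superpolynomially small exceptional probability. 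Combining the three bounds, on an event of probability at least $1-c_m N^{-m}$ we obtain $\diam(\flow_s(\gamma))\geq\diam(\flow_N(\gamma))-2\Delta_N\geq\tfrac K8 N-2\Delta_N\geq 1$ for all $s\in[N,N+1]$ once $N$ is large, which is exactly the bound demanded by the reduction. The only genuinely delicate step is the fixed-time linear lower bound on $\diam(\flow_N(\gamma))$, and it is there that the uniformity over $\mathcal C_R$ is crucial and is supplied by Lemma \ref{lem:holger2}; the single-point displacement and modulus-of-continuity inputs are classical and automatically uniform in $\gamma\in\mathcal C_R$, since they only involve the flow on bounded regions.
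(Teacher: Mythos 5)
Your proof is correct, but it takes a genuinely different route from the paper's. Both arguments share the same skeleton --- a linear lower bound for $\diam(\flow_N(\gamma))$ at integer times $N\ge\lfloor\sqrt T\rfloor$ with failure probability $O(N^{-m})$, control of the fluctuation inside each interval $[N,N+1]$, and a union bound over $N$ --- but the two key estimates are obtained by different means. For the integer-time bound the paper simply imports the estimate from \citep[(3.15) and (3.16)]{vB10}, whereas you rederive it from Lemma \ref{lem:holger2} via the strong Markov property at $\tau^R(\gamma,\rho e_1)$ with $\rho\sim KN$, together with Gaussian tails for the one-point Brownian motions; this is more self-contained relative to the lemmas actually stated in the paper, and the conditioning is sound since $\{\tau\le N\}\in\mathcal F_{0,\tau}$ and your post-$\tau$ displacement bound is deterministic. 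For the interval step the paper applies the two-point non-contraction estimate of \citep[Lemma 6]{sch09}, namely $\inf_{0\le t\le1}\abs{\flow_t(x)-\flow_t(y)}\ge\abs{x-y}\exp(-\kappa/2+\sqrt\kappa\inf_{0\le t\le1}W_t)$, to a single pair of points at linear distance, which yields a lognormal tail decaying faster than any polynomial and needs no information on where the image set lies; you instead cover a ball of radius $CN$ by $O(N^2)$ unit balls, use a Gaussian-tail sup-modulus bound on each, and therefore must add the a priori containment event $\flow_N(\gamma)\subseteq B_{CN}(0)$, for which, as you rightly observe, \rref{cor:upperbound} carries no rate and one has to invoke the classical linear-dispersion bounds with Gaussian tails (\citep{css00,ls01,sch09}); note also that your choice $\Delta_N=O(\sqrt{\log N})$ needs the constant to depend on $m$, which is harmless. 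Both external inputs are standard and uniform in $\gamma\in\mathcal C_R$, so your argument goes through; the paper's two-point device is the more economical one (no covering, no containment event), while your treatment of the fixed-time diameter bound has the advantage of relying only on Lemma \ref{lem:holger2} rather than on further intermediate estimates of \citep{vB10}.
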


\begin{pf}
Following the ideas of van Bargen~\cite{vB10}, (3.15) and (3.16), for
any $m \in\N$ there exists some constant $\tildconst{2}_m$ such that
for sufficiently small $\delta> 0$ and $n \in\N$ large, we have
\begin{eqnarray*}
\sup_{\gamma\in\mathcal{C}_R} \mathbf{P}(S_n(\gamma)) := \sup
_{\gamma\in\mathcal{C}_R} \mathbf{P}\Bigl(\mathop{\mathop{\inf
}_{s \in\N}}_{
s \geq\lfloor\sqrt{n}\rfloor} \diam(\flow_s(\gamma)) < \delta
n\Bigr) \leq\tildconst{2}_mn^{-m}.
\end{eqnarray*}
Similar to~\cite{sch09}, Lemma 6, for $x, y \in\R^2$, there exists a
Brownian motion $W$ such that we have almost surely
\begin{eqnarray*}
\inf_{0 \leq t \leq1} \|\flow_t(x) - \flow_t(y)\| \geq\|x-y\|\exp
\biggl(-\frac{\kappa}{2} + \sqrt{\kappa}\inf_{0 \leq t \leq1}
W_t \biggr),
\end{eqnarray*}
where, according to Lemma~\ref{lem:kappa}, which can be found in the
\hyperref[appm]{Appendix}, we have $\kappa:= \max\{\beta_L;\beta
_N\}$. For $\gamma
\in\mathcal{C}_R$ and any integer $k \geq\lfloor\sqrt{T}\rfloor$,
we choose on $S_{\lfloor T\rfloor}(\gamma)^c$ points $x^{(k)},
y^{(k)} \in\flow_k(\gamma)$ such that $\|x^{(k)} - y^{(k)}\| =
\delta k$. Hence we get for $m \in\N$ and $k$ large enough,
\begin{eqnarray*}
&&\sup_{\gamma\in\mathcal{C}_R}\mathbf{P}\Bigl( \inf_{k \leq t
\leq k+1} \diam(\flow_t(\gamma)) < 1 | S_{\lfloor T\rfloor
}(\gamma)^c\Bigr)\\
&& \qquad \leq\sup_{\gamma\in\mathcal{C}_R} \mathbf{P}\Bigl(
\inf_{0 \leq t \leq1} \|\flow_t\bigl(x^{(k)}\bigr) - \flow_t\bigl
(y^{(k)}\bigr)\| < 1
| S_{\lfloor T\rfloor}(\gamma)^c\Bigr)\\
&& \qquad \leq\mathbf{P}\biggl(\delta k \exp\biggl(-\frac{\kappa
}{2} + \sqrt{\kappa}\inf_{0 \leq t \leq1} W_t \biggr) < 1\biggr)\\
&& \qquad \leq\frac{2}{\sqrt{2 \pi}} (\delta k)^{1/2} \exp
\biggl(-\frac{ (\log(\delta k) )^2}{2\kappa} \biggr).
\end{eqnarray*}
Choosing $k$ such that $(\delta k)^{\log(\delta k)} \geq\delta k^m$,
we get
\begin{eqnarray*}
&&\sup_{\gamma\in\mathcal{C}_R}\mathbf{P}\Bigl( \inf_{k \leq t
\leq k+1} \diam(\flow_t(\gamma)) < 1 \big| S_{\lfloor T\rfloor
}(\gamma)^c\Bigr)\\
&& \qquad \leq\frac{2}{\sqrt{2 \pi}}(\delta k)^{1/2} \exp
\biggl(-\frac{\log(\delta k^m)}{2\kappa} \biggr) = \frac{2}{\sqrt
{2 \pi}}\delta^{\fracb{\kappa-1}{(2\kappa)}} k^{\fracb{\kappa-
m}{(2\kappa)}}.
\end{eqnarray*}
Then there exists a constant $\const{2}_m$ such that for $T$ large,
\begin{eqnarray*}
&&\sup_{\gamma\in\mathcal{C}_R} \mathbf{P}\Bigl(\inf_{s \geq
\sqrt{T}}
\diam(\flow_s(\gamma)) < 1\Bigr) \\
&& \qquad \leq\sum_{k \geq\lfloor\sqrt{T}\rfloor}
\sup_{\gamma\in\mathcal{C}_R} \mathbf{P}\Bigl( \inf_{k \leq t
\leq k+1} \diam(\flow_t(\gamma)) < 1 \big|S_{\lfloor T\rfloor
}(\gamma)^c\Bigr)  + \sup_{\gamma\in\mathcal{C}_R}\mathbf
{P}\bigl(S_{\lfloor T\rfloor}(\gamma)\bigr)\\
&& \qquad \leq\const{2}_m T^{-m},
\end{eqnarray*}
which completes the proof.
\end{pf}

\begin{remark*}
Observe that in the previous lemma uniform convergence in $\gamma\in
\mathcal{C}_R$ is only achieved because the sets in $\mathcal{C}_R$
are large.
\end{remark*}

\section{\texorpdfstring{Proof of Theorem \protect\ref{thm:mainthm}}{Proof of Theorem 3.1}} \label{sec:proofOfThm}

As usual we consider a planar IBF $\flow$, which has a strictly
positive top-Lyapunov exponent. The upper bound (Section~\ref
{sec:upperBound}) and the lower bound (Section~\ref{sec:lowerBound})
of Theorem~\ref{thm:mainthm} will be proved for large sets, that is,
the initial set $\mathcal{X}$ is assumed to be in $\mathcal{C}_R$ for
some arbitrary fixed $R \geq1$. The generalization to nontrivial
compact connected sets will be done in Section~\ref{sec:proof}, which
then completes the proof of Theorem~\ref{thm:mainthm}.
\subsection{Upper bound} \label{sec:upperBound}

This section is devoted to the proof of the upper bound of Theorem \ref
{thm:mainthm}, that is, the following theorem.

%
\begin{theorem} \label{thm:upperbound}
For any $\eps> 0$ and $\mathcal{X} \in\mathcal{C}_R$, we have
\begin{eqnarray*}
\lim_{T \to\infty}\mathbf{P}\Bigl(\sup_{g \in F_T(\mathcal{X})}d
(g, \Lip(K) ) > \eps\Bigr) = 0,
\end{eqnarray*}
where $K$ is the Euclidean radius of the stable norm unit ball; see
Section~\ref{sec:stablenorm}.
\end{theorem}

The proof of Theorem~\ref{thm:upperbound} is divided into several
steps. The main idea is to show that the time-scaled trajectories
behave like Lipschitz functions on some sufficiently small discrete
grid (Lemma~\ref{lem:growingBepsT}), and between two supporting points
large growth of the initial set does not occur (Lemma \ref
{lem:chaining}). For the first estimate we have to control trajectories
starting inside some linearly growing set, which extends the result of
Lemma~\ref{lem:holger2}, where the initial set has a fixed diameter.
The basic lemma to control this is the following.

%
\begin{lemma} \label{lem:convBepsT}
For all $\eps> 0$, $v \in\R^2$ and $0 < \tilde\eps\leq\frac{\eps
}{6\|e_1\|^R}$, we have
\begin{eqnarray*}
\lim_{T \to\infty} \mathbf{P}\biggl(\biggl| \frac{\tau
^R(B_{\tilde\eps
T}(0), vT)}{T} - \|v\|^R\biggr| > \eps\biggr) = 0.
\end{eqnarray*}
\end{lemma}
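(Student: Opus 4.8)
The plan is to bound $\tau^R(B_{\tilde\eps T}(0),vT)$ from above and below by hitting times of \emph{fixed-diameter} sets, so that Lemma~\ref{lem:holger2} applies. The lower bound is the easy direction: since $\{0\} \subseteq B_{\tilde\eps T}(0)$, any large subset of the flowed ball starting from a point sits inside the flowed ball, so hitting $B_R(vT)$ as a large set cannot happen before a single point of $\mathcal{X}$ does. More precisely, pick any large $\gamma_0 \in \mathcal{C}_R$ with $\gamma_0 \ni 0$; then $\flow_{0,t}(\gamma_0) \subseteq \flow_{0,t}(B_{\tilde\eps T}(0))$ is automatic only after we note that $\gamma_0 \subseteq B_{\tilde\eps T}(0)$ for $T$ large, whence $\tau^R(B_{\tilde\eps T}(0),vT) \le \tau^R(\gamma_0,vT)$, and Lemma~\ref{lem:holger2} gives the upper tail $\tau^R(B_{\tilde\eps T}(0),vT) \le (\normR{v}+\eps/2)T$ with probability $\to 1$. (One must be slightly careful that ``large'' already holds for $\gamma_0$, which it does by construction.)

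The upper bound on $\tau^R$ (equivalently, the lower tail, showing the flowed ball does not hit $B_R(vT)$ as a large set too early) is the main obstacle. Here I would argue by contradiction/covering: if $\flow_{0,t}(B_{\tilde\eps T}(0))$ meets $B_R(vT)$ as a large set at some time $t < (\normR{v}-\eps)T$, then by the flow property and continuity there is a \emph{small} ball $B_\rho(y) \subseteq B_{\tilde\eps T}(0)$ whose image already has diameter $\ge 1$ and meets $B_{2R}(vT)$, say — i.e. some large subset $\gamma \subseteq \flow_{0,s}(B_\rho(y))$ with $s \le \sqrt{T}$ (using Lemma~\ref{lem:diameter} to ensure the diameter becomes and stays large quickly, uniformly) reaches the target region by time $t$. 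Cover $B_{\tilde\eps T}(0)$ by $O((\tilde\eps T/\rho)^2) = O(T^2)$ balls of fixed radius $\rho$; for each center $y$, $\flow_{0,s}(B_\rho(y))$ becomes a set in $\mathcal{C}_R$ (after recentering) with high probability by Lemma~\ref{lem:diameter}, and then Lemma~\ref{lem:holger2} bounds the probability that \emph{this} set hits $B_R(vT - y)$ — note $\normR{v - y/T}$ is within $O(\tilde\eps\normR{e_1}) \le \eps/6$ of $\normR{v}$ by the triangle inequality for $\normR{\cdot}$, since $|y| \le \tilde\eps T$ and $\tilde\eps \le \eps/(6\normR{e_1})$ — by $\const{1}_m T^{-m}$. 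A union bound over the $O(T^2)$ centers, taking $m \ge 3$, kills the total probability as $T\to\infty$.

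Assembling: on the good event (complement of the Lemma~\ref{lem:diameter} bad events and of all the Lemma~\ref{lem:holger2} tail events over the covering), we get
\[
(\normR{v} - \eps)T \;\le\; \tau^R(B_{\tilde\eps T}(0),vT) \;\le\; (\normR{v} + \eps)T,
\]
and the good event has probability $\to 1$. The key quantitative input is that the perturbation of the stable norm caused by shifting the target by a vector of length $\le \tilde\eps T$ is $\le \tilde\eps \normR{\cdot}$-comparable, i.e. $|\normR{v - y/T} - \normR{v}| \le \normR{y/T} = (|y|/T)\normR{e_1} \le \tilde\eps\normR{e_1} \le \eps/6$, which is exactly why the hypothesis on $\tilde\eps$ is phrased through $\normR{e_1}$. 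The hardest part is the covering step: one must make sure the number of balls grows only polynomially (so that the $T^{-m}$ tails from Lemma~\ref{lem:holger2} with $m$ chosen large enough beat it) and that each flowed small ball genuinely enters $\mathcal{C}_R$ quickly and uniformly, for which Lemma~\ref{lem:diameter} (applied after the flow has run for time $\sqrt{T}$, negligible on the scale $T$) is exactly the tool.
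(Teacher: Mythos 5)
Your first direction (the upper tail of the hitting time) is fine and matches the paper: by monotonicity of both defining conditions of $\tau^R$ under set inclusion, $\tau^R(B_{\tilde\eps T}(0),vT)\le\tau^R(\gamma_0,vT)$ for a fixed large $\gamma_0\subseteq B_{\tilde\eps T}(0)$, and Lemma \ref{lem:holger2} finishes. The genuine gap is in the other direction, which is the heart of the lemma. Your covering argument over $O(T^2)$ balls $B_\rho(y)\subseteq B_{\tilde\eps T}(0)$ requires, for each center, a bound of the form
\begin{align*}
\sup_{\gamma\in\mathcal{C}_R}\prob{\tau^R\bigl(\gamma,\,wT\bigr)\le\bigl(\normR{w}-\eps'\bigr)T}\le c\,T^{-m},
\end{align*}
i.e.\ a \emph{polynomially decaying, uniform bound on the lower tail} of the hitting time (the event of arriving too early). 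But Lemma \ref{lem:holger2} supplies a quantitative $T^{-m}$ bound only for the \emph{upper} tail $\tau^R(\gamma,Tv)>(\normR{v}+\eps)T$; for the lower tail it gives only convergence in probability with no rate, which cannot survive a union bound over polynomially many centers. Nor can you substitute the a priori linear dispersion bounds here, since the stable-norm speed is in general strictly below the dispersion upper bound, so "slightly faster than stable-norm speed" is not an a priori forbidden event. Without such a quantitative lower-tail estimate (which is not among the quoted tools and is not obviously available), the union-bound step does not close. A secondary, fixable point: Lemma \ref{lem:diameter} only applies to \emph{large} sets, so your covering balls must have radius $\rho\ge 1/2$; as stated ("fixed radius $\rho$") this is left implicit.

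For comparison, the paper avoids ever needing a quantitative lower-tail bound by a time-reversal trick: using \rref{eq:timereversal}, spatial homogeneity and the positive-probability estimate \rref{eq:p1}, the event that the flowed \emph{big} ball $\flow_t(B_{\tilde\eps T}(0))$ reaches the small target $B_R(vT)$ early is dominated (up to the factor $1/p_1$ and a Lemma \ref{lem:diameter} error term) by the event that the flowed \emph{small} ball $\flow_{t+\alpha}(B_R(0))$ reaches the big target $B_{\tilde\eps T}(vT)$ early, i.e.\ by $\{\tau^{\tilde\eps T}(B_R(0),vT)\le t+\alpha\}$. That event is then split into a single lower-tail event for $\tau^R(B_R(0),vT)$ (handled by the non-quantitative part of Lemma \ref{lem:holger2}) and, via an $R$-net of polynomially many points on $\partial B_{\tilde\eps T}(vT)$ together with the Markov property \rref{eq:markovprop}, polynomially many \emph{upper}-tail events (taking longer than $\eps T/3$ to cover the remaining stable-norm distance $\tilde\eps\normR{e_1}T\le\eps T/6$), to which the quantitative part of Lemma \ref{lem:holger2} does apply. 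Your observation that $\tilde\eps\le\eps/(6\normR{e_1})$ controls the stable-norm effect of an offset of length $\tilde\eps T$ is exactly the right quantitative ingredient, but in the paper it enters through this reversed decomposition rather than through a union bound over the initial ball.
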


\begin{pf}
Since $B_{R}(0) \subset B_{\tilde\eps T}(0)$ for $T$ large, we have,
because of Lemma~\ref{lem:holger2},
\begin{eqnarray*}
&&\mathbf{P}\bigl(\tau^R(B_{\tilde\eps T}(0), vT) > (\|v\|^R + \eps
)T\bigr)\\
&& \qquad \leq\mathbf{P}\bigl(\tau^R( B_{R}(0), vT) > (\|v\|^R
+ \eps)T\bigr) \to0.
\end{eqnarray*}
According to~\cite{vB10}, Lemma 4.4, there exists a constant $\alpha>
0$ such that
%
%
\begin{eqnarray} \label{eq:p1}
\inf_{\gamma\in\mathcal{C}^*_R} \inf_{t \geq\alpha} \mathbf
{P}\bigl(\flow_t(\gamma)\cap\partial B_R(0) \neq\varnothing;
\diam(\flow
_t(\gamma)) \geq1\bigr) =: p_1 > 0,
\end{eqnarray}
where $\mathcal{C}^*_R$ denotes the set of all large connected subsets
$\gamma$ of $\R^2$ with $\gamma\cap\partial B_R(0) \neq\varnothing
$. Estimate \rref{eq:p1} basically tells that, given some extra time
$\alpha$ uniformly in $\gamma\in\mathcal{C}_R^*$, there is a
positive probability that $\flow_t(\gamma)$ will stay intersected
with $\partial B_R(0)$. By spatial homogeneity, the time reversal
property of IBFs [see \rref{eq:timereversal}] and \rref{eq:p1}, we get
\begin{eqnarray*}
&&\mathbf{P}\bigl(\flow_{t+\alpha}( B_R(0)) \cap B_{\tilde\eps T}(vT)
\neq\varnothing\bigr) \\
&& \qquad = \mathbf{P}\bigl(B_R(vT) \cap\flow
_{t+\alpha
}(B_{\tilde\eps T}(0)) \neq\varnothing\bigr)\\
&& \qquad \geq\mathbf{P}\bigl(B_R(vT) \cap\flow_{t+\alpha
}(B_{\tilde\eps T}(0)) \neq\varnothing | \tau^R(B_{\tilde\eps
T}(0),vT) \leq t\bigr)\\ && \qquad \quad {}\cdot\mathbf{P}\bigl
(\tau^R(B_{\tilde
\eps T}(0), vT) \leq t\bigr)\\
&& \qquad \geq p_1 \mathbf{P}\bigl(\tau^R(B_{\tilde\eps T}(0), vT)
\leq t\bigr).
\end{eqnarray*}
According to Lemma~\ref{lem:diameter}, for any $m \in\N$ there
exists a constant $\const{2}_m$ such that for $t \geq\sqrt{T}$, we have
\begin{eqnarray*}
\mathbf{P}\bigl(\diam(\flow_{t+\alpha}(B_R(0)) )< 1\bigr) \leq
\const{2}_mT^{-m}.
\end{eqnarray*}
Thus we get for $t \geq\sqrt{T}$
%
%
\begin{eqnarray} \label{eq:firstEst}\qquad
\mathbf{P}\bigl(\tau^R(B_{\tilde\eps T}(0), vT) \leq t\bigr) \leq
\frac
{1}{p_1} \mathbf{P}\bigl(\tau^{\tilde\eps T}( B_{R}(0), vT) \leq t +
\alpha\bigr) + \frac{\const{2}_m}{p_1}T^{-m}.
\end{eqnarray}
Further we have
%
%
\begin{eqnarray} \label{eq:secondEst}
&&\mathbf{P}\biggl(\tau^{\tilde\eps T}( B_{R}(0), vT) \leq\biggl
(\|v\|^R -
\frac{\eps}{2} \biggr)T\biggr) \notag\hspace*{-20pt}\\[-2pt]
&& \quad \leq\mathbf{P}\biggl(\tau^{\tilde\eps T}( B_{R}(0), vT)
\leq\biggl(\|v\|^R - \frac{\eps}{2} \biggr)T ;  \tau^R(
B_R(0),vT) > \biggl(\|v\|^R - \frac
{\eps}{6} \biggr)T\biggr)\hspace*{-20pt}  \\[-2pt]
&& \qquad{} + \mathbf{P}\biggl(\tau^R( B_R(0),vT) \leq
\biggl(\|v\|^R -
\frac{\eps}{6} \biggr)T\biggr),
\nonumber\hspace*{-20pt}
\end{eqnarray}
where the second term converges to $0$ for $T \to\infty$ by Lemma
\ref{lem:holger2}. To estimate the first term consider an $R$-net on
$\partial B_{\tilde\eps T}(vT)$, that is, there exists $N(\tilde\eps
T) \in\N$ and points $Tw_1,\ldots, Tw_{N(\tilde\eps T)} \in
\partial B_{\tilde\eps T}(0)$ such that
\begin{eqnarray*}
\partial B_{\tilde\eps T}(vT) \subseteq\bigcup_{i = 1}^{N(\tilde
\eps T)}B_R\bigl((v+w_i)T\bigr),
\end{eqnarray*}
where $N(\tilde\eps T)$ grows at most polynomial in $T$ for a fixed
degree $\tilde m \in\N$. Thus we get, estimating the first term in
\rref{eq:secondEst}, using isotropy of the flow,
%
%
\begin{eqnarray} \label{eq:thirdEst}
&&\mathbf{P}\biggl(\tau^{\tilde\eps T}( B_{R}(0), vT) \leq\biggl
(\|v\|^R -
\frac{\eps}{2} \biggr)T ; \tau^R( B_R(0),vT) > \biggl(\|v\|^R -
\frac{\eps}{6} \biggr)T\biggr) \notag\\
&& \qquad \leq\sum_{i = 1}^{N(\tilde\eps T)} \mathbf{P}\biggl
(\tau
^R( B_R(0),vT)
\nonumber\\[-2pt]
&&  \hphantom{\leq\sum_{i = 1}^{N(\tilde\eps T)} \mathbf{P}\biggl
(}\qquad
> \biggl(\|v\|^R - \frac{\eps}{6} \biggr)T
| \tau^{R}\bigl( B_{R}(0), (v+w_i)T\bigr)\leq\biggl(\|v\|^R - \frac{\eps}{2}
\biggr)T\biggr)\\[-2pt]
&& \qquad \leq\sum_{i = 1}^{N(\tilde\eps T)} \mathbf{P}\biggl
(\tau
^R\bigl(\flow_{\tau^R( B_R(0),(v+w_i)T)}( B_R(0)),vT\bigr) > \frac
{\eps}{3}
T\biggr)\notag\\[-2pt]
&& \qquad \leq{N(\tilde\eps T)} \sup_{\gamma\in\mathcal{C}_R}
\mathbf{P}\biggl(\tau^R(\gamma, e_1\tilde\eps T) > \frac{\eps
}{3} T\biggr)
\notag\\[-2pt]
&& \qquad \leq{N(\tilde\eps T)} \sup_{\gamma\in\mathcal{C}_R}
\mathbf{P}\biggl(\tau^R(\gamma, e_1\tilde\eps T) > \biggl(\tilde
\eps\|
e_1\|^R+\frac{\eps}{6} \biggr) T\biggr).
\nonumber
\end{eqnarray}
This last probability converges according to Lemma~\ref{lem:holger2},
uniformly in $\gamma\in\mathcal{C}_R$, as $o(T^{-m})$ for $m >
\tilde m$ to 0 as $T\to\infty$. Hence combining \rref{eq:firstEst},
\rref{eq:secondEst} and \rref{eq:thirdEst}, we get for $t = (\|
v\|^R - \eps)T$ and $T \geq\frac{2\alpha}{\eps}$,
\begin{eqnarray*}
&&\mathbf{P}\bigl(\tau^R(B_{\tilde\eps T}(0), vT) \leq(\|v\|^R -
\eps)T\bigr) \\[-2pt]
&& \qquad \leq\frac{1}{p_1} \mathbf{P}\biggl(\tau^{\tilde\eps T}(
B_{R}(0), vT) \leq\biggl(\|v\|^R - \frac{\eps}{2} \biggr)T\biggr)
+ \frac
{\const{2}_m}{p_1} T^{-m}\\[-2pt]
&& \qquad \to0,
\end{eqnarray*}
as $T \to\infty$, which completes the proof.\vadjust{\goodbreak}
\end{pf}

Using Lemma~\ref{lem:convBepsT} we will show that all time-scaled
trajectories starting in a linearly growing set behave like a Lipschitz
function for a given mesh size $\Delta t$.

%
\begin{lemma} \label{lem:growingBepsT}
Let $\eps\in(0, 1)$ and $\Delta t \in(0,1)$. Then for $0 < \tilde
\eps\leq\frac{K(1+\eps/2)\Delta t \eps}{6(4 + \eps)}$, we have
\begin{eqnarray*}
\lim_{T\to\infty} \mathbf{P}\biggl(\sup_{x \in B_{\tilde\eps
T}(0)} \biggl|
\frac{1}{T}x - \frac{1}{T}\flow_{\Delta t T}(x)\biggr| \geq\Delta
t K (1
+ \eps)\biggr) = 0.
\end{eqnarray*}
\end{lemma}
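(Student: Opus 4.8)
\emph{Proof strategy.} The plan is to reduce the statement to a hitting‑time estimate of the type supplied by Lemma \ref{lem:convBepsT}. Set $\rho := (1+\eps/2)K\Delta t$. If some $x\in B_{\tilde\eps T}(0)$ satisfies $\abs{\frac1Tx-\frac1T\flow_{\Delta tT}(x)}\ge (1+\eps)K\Delta t$, then $\abs{\flow_{\Delta tT}(x)}\ge(1+\eps)K\Delta t\,T-\abs x\ge\big((1+\eps)K\Delta t-\tilde\eps\big)T>\rho T$, the last inequality because $\tilde\eps<\frac\eps2 K\Delta t$ (which is implied by the assumed bound on $\tilde\eps$, since $2(1+\eps/2)<6(4+\eps)$). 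Hence the event in question is contained in $\{\flow_{\Delta tT}(B_{\tilde\eps T}(0))\not\subseteq B_{\rho T}(0)\}$, and it suffices to show that this last event has vanishing probability. Moreover, since $t\mapsto\flow_t(0)$ is a standard planar Brownian motion, the event $E_T:=\{\abs{\flow_{\Delta tT}(0)}\le\sqrt{T}\log T\}$ has complement of probability $o(T^{-m})$ for every $m$, so I may argue on $E_T$.

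On $E_T\cap\{\flow_{\Delta tT}(B_{\tilde\eps T}(0))\not\subseteq B_{\rho T}(0)\}$ the set $\flow_{\Delta tT}(B_{\tilde\eps T}(0))$ is compact and connected (a homeomorphic image of the ball), it contains the point $\flow_{\Delta tT}(0)$ at distance $\le\sqrt{T}\log T<\rho T$ from the origin, and it contains a point at distance $>\rho T$; being connected it must meet the sphere $\partial B_{\rho T}(0)$. Fix an $R$‑net $v_1T,\dots,v_{N(T)}T$ on $\partial B_{\rho T}(0)$ (so $\abs{v_i}=\rho$ and $\partial B_{\rho T}(0)\subseteq\bigcup_iB_R(v_iT)$), with $N(T)=O(T)$; then the set meets $B_R(v_iT)$ for some $i$, and its diameter is at least $\rho T-\sqrt{T}\log T\ge1$ for $T$ large. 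Hence $\tau^R(B_{\tilde\eps T}(0),v_iT)\le\Delta tT$ for that $i$. A union bound together with the isotropy of the flow then gives
\begin{align*}
\prob{\flow_{\Delta tT}(B_{\tilde\eps T}(0))\not\subseteq B_{\rho T}(0)}\le\prob{E_T^c}+N(T)\,\prob{\tau^R(B_{\tilde\eps T}(0),\rho Te_1)\le\Delta tT}.
\end{align*}

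It remains to treat the last probability. Here $\normR{\rho e_1}=\rho/K=(1+\eps/2)\Delta t$, so $\Delta tT=(\normR{\rho e_1}-\frac\eps2\Delta t)T$, and the assumed bound $\tilde\eps\le\frac{K(1+\eps/2)\Delta t\eps}{6(4+\eps)}$ implies $\tilde\eps\le\frac{\eps\Delta t/2}{6\normR{e_1}}$ (this is just $12(1+\eps/2)\le6(4+\eps)$), which is exactly the hypothesis of Lemma \ref{lem:convBepsT} with the parameter $\frac\eps2\Delta t$ playing the role of its $\eps$. Thus $\prob{\tau^R(B_{\tilde\eps T}(0),\rho Te_1)\le\Delta tT}\to0$. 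Since the $R$‑net has $N(T)=O(T)$ points, to close the argument one needs this probability to decay at least like $o(T^{-1})$, i.e.\ the quantitative (polynomially decaying) form of the hitting‑time lower bound; this rate is available from the estimates underlying Lemma \ref{lem:convBepsT} and \ref{lem:holger2} — the proof of Lemma \ref{lem:convBepsT} already produces polynomially small bounds for every term except the ``too fast'' probability $\prob{\tau^R(B_R(0),vT)\le(\normR v-\eps)T}$ for a set of fixed diameter, and the latter decays polynomially by van Bargen's hitting‑time estimates \citep{vB10} (equivalently, by the sharp‑constant upper bound on the linear expansion of a compact set).

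The step I expect to be the main obstacle is precisely this last one: one has to cover a sphere of radius $\asymp T$, so the union bound costs a factor $O(T)$, and only a polynomial‑rate version of Lemma \ref{lem:convBepsT} defeats it — the bare convergence‑in‑probability statement is not enough. It should also be stressed that one cannot circumvent this by replacing the directional hitting time with a crude diameter‑growth bound: the ratio $\rho/\Delta t=(1+\eps/2)K$ lies only slightly above the stable‑norm radius $K$, so the argument genuinely requires the sharp growth constant $K$ rather than a mere upper bound such as $2K$.
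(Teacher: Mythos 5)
Your reduction of the event to ``the image of $B_{\tilde\eps T}(0)$ leaves $B_{\rho T}(0)$ with $\rho=(1+\eps/2)K\Delta t$, hence hits the sphere $\partial B_{\rho T}(0)$ as a large set before time $\Delta tT$'' is sound and close in spirit to the first half of the paper's argument (the paper reaches the analogous event $S_1(T)$ via the stable norm and the log-confinement result of Scheutzow--Steinsaltz instead of the Brownian one-point motion, but this part is interchangeable). The gap is exactly where you flag it, and it is not a removable technicality: your net lives on the physical sphere $\partial B_{\rho T}(0)$ and therefore has $N(T)=O(T)$ points, so the union bound needs $\prob{\tau^R(B_{\tilde\eps T}(0),\rho Te_1)\le\Delta tT}=o(T^{-1})$. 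No such rate is available from the tools you invoke: Lemma \ref{lem:convBepsT} is a bare convergence-in-probability statement, and the quantitative, arbitrarily-polynomial bound in Lemma \ref{lem:holger2} (from \citep[(3.27)]{vB10}) concerns only the \emph{upper} tail $\tau^R>(\normR{v}+\eps)T$ (hitting too late). The lower-tail event you need (hitting too early, i.e.\ expansion faster than the stable norm with the sharp constant) is cited in the paper only through \citep[Corollary 4.7]{vB10}, again without any rate; asserting that ``the latter decays polynomially by van Bargen's hitting-time estimates'' is an unproved claim, and even if some rate existed for fixed sets $\gamma\in\mathcal{C}_R$, you would still have to push it through the proof of Lemma \ref{lem:convBepsT} for the linearly growing initial set $B_{\tilde\eps T}(0)$, where the same unquantified lower-tail term $\prob{\tau^R(B_R(0),vT)\le(\normR{v}-\eps/6)T}$ reappears.

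The paper's proof is engineered precisely to avoid needing any such rate: instead of covering the sphere of radius $\rho T$ in space, it places a \emph{fixed} $\delta$-net $v_1,\dots,v_N$ (with $N$ independent of $T$) on the stable-norm sphere $\Delta t\,\partial\mathcal{B}$ in velocity coordinates, and then uses the Markov property together with the uniform estimate of Lemma \ref{lem:holger2}: on the event $S_1(T)$ that some point $vT$ of the sphere is reached too early, the hitting set is a large set in $\mathcal{C}_R$ at distance at most $\delta T$ from the nearest $v_jT$, so with probability tending to $1$ (uniformly over $\mathcal{C}_R$) it goes on to reach $B_R(v_jT)$ within the additional time budget $\bigl(\tfrac{1}{1+\eps/4}-\tfrac{1}{1+\eps/2}\bigr)\Delta tT$; hence some of the finitely many net points is also hit too early, and for those finitely many points the unquantified Lemma \ref{lem:convBepsT} suffices. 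To repair your argument you would either have to import this chaining step (a net of fixed cardinality plus the conditional ``catch up to a nearby net point'' estimate) or genuinely prove a quantitative $o(T^{-1})$ lower-tail bound for the hitting time, which is not established in the paper or in the cited form of van Bargen's results.
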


\begin{pf}
Since $| v| = K\|v\|^R$ and $\tilde\eps\leq\Delta t K \frac{\eps
}{2}$, we have for some constant $c^*$, specified below, and $T$ large,
\begin{eqnarray*}
&&\mathbf{P}\Bigl(\sup_{x \in B_{\tilde\eps T}(0)} | x - \flow
_{\Delta t
T}(x)| \geq\Delta t K (1 + \eps) T\Bigr)\\[-2pt]
&& \qquad \leq\mathbf{P}\biggl(\sup_{x \in B_{\tilde\eps T}(0)} \|
\flow_{\Delta t T}(x)\|^R \geq\Delta t \biggl(1 + \frac{\eps
}{2} \biggr)T\biggr) \\[-2pt]
&& \qquad \leq\mathbf{P}\biggl(\exists x \in B_{\tilde\eps T}(0)
\dvtx
\|\flow_{\Delta t T}(x)\|^R = \Delta t \biggl(1 + \frac{\eps
}{2} \biggr)T\biggr)\\[-2pt]
&& \qquad \quad{} +\mathbf{P}\biggl(\inf_{x \in B_{\tilde\eps
T}(0)} \|\flow
_{\Delta t T}(x)\|^R > \Delta t \biggl(1 + \frac{\eps}{2} \biggr
)T\biggr)\\[-2pt]
&& \qquad \leq\mathbf{P}\biggl(\exists v \in\R^2\dvtx \|v\|^R =
\Delta
t \biggl(1 + \frac{\eps}{2} \biggr) ; \flow_{\Delta t
T}(B_{\tilde\eps T}(0)) \cap B_R (vT ) \neq\varnothing\biggr)\\[-2pt]
&& \qquad \quad{} +\mathbf{P}\Bigl(\inf_{x \in B_{\tilde\eps
T}(0)} | \flow
_{\Delta t T}(x)| > c^* \log(\Delta t T)\Bigr)\\[-2pt]
&& \qquad \leq\mathbf{P}\biggl(\exists v \in\R^2\dvtx \|v\|^R=
\Delta
t \biggl(1 + \frac{\eps}{2} \biggr) ; \tau^R (B_{\tilde\eps
T}(0),vT ) \leq\Delta t T\biggr)\\[-2pt]
&& \qquad \quad{} + \mathbf{P}\bigl(\diam(\flow_{\Delta t
T}(B_{\tilde\eps
T}(0))) < 1\bigr)\\[-2pt]
&& \qquad \quad{} +\mathbf{P}\Bigl(\inf_{x \in B_{\tilde\eps
T}(0)} | \flow
_{\Delta t T}(x)| > c^* \log(\Delta t T)\Bigr).
\end{eqnarray*}
First observe that~\cite{ss02}, Theorem 4.2, yields the existence of a
constant $c^*$ such that the probability that there exists some $x \in
B_{\tilde\eps T}(0)$, which remains in a logarithmic neighborhood of
the origin, that is, $| \flow_s(x)|\leq c^*\log{s}$ for all $s \geq
\Delta t T$, converges to $1$ for $T \to\infty$. Hence the third
probability converges to $0$, and, because of Lemma~\ref
{lem:diameter}, the second probability converges to $0$ as well. Thus
we get
%
%
\begin{eqnarray} \label{eq:firstLipschitz}
&&\lim_{T\to\infty}\mathbf{P}\Bigl(\sup_{x \in B_{\tilde\eps
T}(0)} | x
- \flow_{\Delta t T}(x)| \geq\Delta t K (1 + \eps) T\Bigr)\notag\\
&& \qquad \leq\lim_{T\to\infty}\mathbf{P}\biggl(\exists v \in\R
^2\dvtx
\|v\|^R = \Delta t \biggl(1 + \frac{\eps}{2} \biggr) ; \tau
^R (B_{\tilde\eps T}(0),vT ) \leq\Delta t T\biggr)\\
&& \qquad =\lim_{T\to\infty}\mathbf{P} \biggl(\underbrace
{\exists v \in\Delta t \partial\mathcal{B} \dvtx \tau^R \bigl
(B_{(\fracc
{\tilde\eps}{1+\eps/2}) T}(0),vT \bigr) \leq\frac{\Delta t}{1 +
\fraca{\eps}{2}} T}_{:= S_1(T)} \biggr),\notag
\end{eqnarray}
where $\mathcal{B}$ denotes the unit ball with respect to the stable
norm. Let now $\delta:= \frac{\eps\Delta t}{16\|e_1\|^R}$ and
$v_1,\ldots,v_N$ a $\delta$-net on $\Delta t\partial\mathcal{B}$.
Because of Lemma~\ref{lem:convBepsT} with $\tilde\eta:= \frac
{\tilde\eps}{(1 + \eps/2)} \leq\frac{K}{6}\frac{\Delta t \eps}{4
+ \eps}$, we have
%
%
\begin{eqnarray} \label{eq:firstconv}
\mathbf{P}(S_2(T)) := \mathbf{P}\biggl(\exists j \dvtx \tau^R
(B_{\tilde
\eta T}(0),v_j T ) \leq\frac{\Delta t}{ (1+\fraca{\eps
}{4} )} T\biggr) \to0.
\end{eqnarray}
Because of the isotropy of the flow, we get
%
%
\begin{eqnarray} \label{eq:secondconv}
&&\mathbf{P}(S_2(T)^c|S_1(T))\notag\\
&& \qquad =\mathbf{P}\biggl(\forall j\dvtx \tau^R (B_{\tilde\eta
T}(0),v_j T ) > \frac{\Delta t}{ (1+\fraca{\eps}{4}
)} T \Big| S_1(T)\biggr)\notag\\
&& \qquad \leq\mathbf{P}\biggl(\forall j\dvtx | v-v_j| \leq\delta;
\tau^R (B_{\tilde\eta T}(0),v_j T ) > \frac{\Delta
t}{ (1+\fraca{\eps}{4} )} T \Big| S_1(T)\biggr)\notag\\
&& \qquad \leq\mathbf{P} \biggl(\forall j\dvtx | v-v_j| \leq
\delta
; \tau^R\bigl(\flow_{\tau^R(B_{\tilde\eta T}(0),vT)}(B_{\tilde
\eta
T}(0)), v_j T\bigr)\notag \\
&&\hspace*{62.5pt} \qquad \hphantom{\leq\mathbf{P} \biggl(}
>\biggl( \frac{1}{ (1+\fraca{\eps}{4}
)}-\frac{1}{ (1+\fraca{\eps}{2} )} \biggr)\Delta t T
\Big| S_1(T) \biggr)\\
&& \qquad \leq\sup_{\gamma\in\mathcal{C}_R} \mathbf{P}\biggl
(\tau
^R(\gamma, \delta e_1 T) > \biggl( \frac{1}{ (1+\fraca{\eps
}{4} )}-\frac{1}{ (1+\fraca{\eps}{2} )}
\biggr)\Delta t T\biggr)\notag\\
&& \qquad \leq\sup_{\gamma\in\mathcal{C}_R} \mathbf{P}\biggl
(\tau
^R(\gamma, \delta e_1 T) > \biggl(\delta\|e_1\|^R + \frac{\eps
}{16}\Delta t \biggr) T\biggr),\notag
\end{eqnarray}
which converges to $0$ for $T \to\infty$ according to Lemma \ref
{lem:holger2}. Combining \rref{eq:firstconv} and \rref{eq:secondconv}
now yields
\begin{eqnarray*}
\mathbf{P}(S_1(T)) \leq\mathbf{P}({S_2(T)}^c|S_1(T)) + \mathbf
{P}(S_2(T)) \to0,
\end{eqnarray*}
which completes the proof because of \rref{eq:firstLipschitz}.
\end{pf}

The event that
between two supporting points of the grid (chosen sufficiently close)
the trajectories move not too
quickly will be treated in the following lemma.
It is an application of the chaining techniques
introduced by Scheutzow~\cite{sch09}.

%
\begin{lemma} \label{lem:chaining}
For any bounded $\mathcal{X} \subseteq\R^2$, $a > 0$ and any
partition $0 = t_0 < t_1 <\cdots< t_n = 1$ of $[0,1]$ with $\Delta t
:= \max_{i}| t_{i-1}-t_i| < \frac{a^2}{12\kappa}$ with $\kappa:=
\max\{\beta_L;\beta_N\}$, we have
\begin{eqnarray*}
\lim_{T \to\infty}
\mathbf{P} \biggl(\sup_{x \in\mathcal{X}} \max_{i} \sup_{t_i \leq
t \leq
t_{i+1}}\biggl| \frac{1}{T}\flow_{t_iT}(x)-\frac{1}{T}\flow
_{tT}(x)\biggr| > a \biggr)
= 0.
\end{eqnarray*}
\end{lemma}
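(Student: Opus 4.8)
The plan is to reduce, by a union bound over the finitely many sub\nobreakdash-intervals, to a single interval $[t_i,t_{i+1}]$, to fix a \emph{deterministic} $\rho$-net $x_1,\dots,x_{N_0}$ of the fixed bounded set $\mathcal{X}$ with $N_0=N_0(T)\lesssim(\diam\mathcal{X})^2\rho^{-2}$ and a radius $\rho=\rho(T)$ to be chosen, and to split, for $x\in B_\rho(x_k)$ and $t_i\le t\le t_{i+1}$,
\begin{align*}
\abs{\flow_{tT}(x)-\flow_{t_iT}(x)}\le 2\sup_{0\le s\le T}\norm{\flow_s(x)-\flow_s(x_k)}+\sup_{t_i\le t\le t_{i+1}}\norm{\flow_{tT}(x_k)-\flow_{t_iT}(x_k)},
\end{align*}
using $t_iT,tT\le T$. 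The two terms are treated by different mechanisms, and $\rho$ will be taken exponentially small, so that the first term is forced to be negligible while the resulting (exponentially large) $N_0$ is still beaten by the Gaussian tail of the second term.

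Since $x_k$ is deterministic, the flow property together with the stationarity and independence of the increments of $\flow$ shows that $\sup_{t_i\le t\le t_{i+1}}\norm{\flow_{tT}(x_k)-\flow_{t_iT}(x_k)}$ has the law of $\sup_{0\le s\le(t_{i+1}-t_i)T}\norm{\tilde B_s}$ for a standard planar Brownian motion $\tilde B$ (the image of a point under an IBF is a Brownian motion with covariance $b(0)=\id$), so by the reflection principle the probability that this exceeds $\tfrac a2 T$ is of order $\exp(-c\,a^2T/\Delta t)$ uniformly in $k$, using $(t_{i+1}-t_i)T\le\Delta t\,T$. For the first term one must control how much the flow distorts the tiny ball $B_\rho(x_k)$ over the \emph{macroscopic} horizon $[0,T]$, i.e. $\sup_{0\le s\le T}\diam(\flow_s(B_\rho(x_k)))$; this is where Scheutzow's chaining enters. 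Starting from the two-point estimate already used in the proof of Lemma~\ref{lem:diameter} — the analogue of \citep[Lemma~6]{sch09}, bounding $\sup_{0\le s\le1}\norm{\flow_s(y)-\flow_s(y')}$ by $\norm{y-y'}\exp(\tfrac\kappa2+\sqrt\kappa\sup_{[0,1]}W)$ with $\kappa=\max\{\beta_L;\beta_N\}$ — one chains over dyadic spatial scales inside $B_\rho(x_k)$ and iterates over the $T$ unit time-steps to obtain $\diam(\flow_s(B_\rho(x_k)))\le C\rho\exp(\tfrac\kappa2 T+\sqrt{\kappa T}\,Z)$ for $s\le T$, with $Z$ sub\nobreakdash-Gaussian of variance proxy controlled uniformly in $T$. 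Choosing $\rho$ so that $C\rho\exp(\tfrac\kappa2 T)=\tfrac a4 T$ forces $\rho$ to decay exponentially, gives $N_0\lesssim T^{-2}\exp((\kappa+2\gamma)T)$ once a deviation scale $\gamma>0$ is fixed, and makes the probability that $2\sup_{s\le T}\norm{\flow_s(x)-\flow_s(x_k)}$ exceeds $\tfrac a2 T$ of order $\exp(-c'\gamma^2T/\kappa)$, uniformly in $k$.

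Putting the pieces together, the probability in the statement is at most (a constant times) $T^{-2}\exp((\kappa+2\gamma)T)\big(\exp(-c'\gamma^2T/\kappa)+\exp(-c\,a^2T/\Delta t)\big)$. One first picks $\gamma$ large (depending only on $\kappa$) so that the first summand beats the prefactor; the second summand then vanishes precisely when $\Delta t$ is small compared with $a^2/\kappa$, and keeping track of the constants coming from \citep[Lemma~6]{sch09} and the reflection principle is exactly what produces the stated threshold $\Delta t<a^2/(12\kappa)$. I expect the main obstacle to be precisely the chaining estimate for $\sup_{s\le T}\diam(\flow_s(B_\rho(x_k)))$: one needs a tail that decays fast enough to survive multiplication by the exponentially many net balls $N_0$, i.e. a quantitative control of the worst-case exponential expansion of a point-sized neighbourhood under the flow over a time of order $T$ — which is the content of Scheutzow's technique, and where the positivity of $\kappa$ and the sub\nobreakdash-Gaussianity inherited from the generating Brownian field are used.
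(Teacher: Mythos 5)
Your overall architecture coincides with the paper's: a deterministic net of $\mathcal{X}$ with exponentially small mesh, a Gaussian/reflection bound for the increments of the (Brownian) net-point motions on each sub-interval, and a control of how much the flow can expand the tiny net pieces over the whole horizon $[0,T]$. The genuine gap is exactly the step you flag as the main obstacle: the claimed estimate $\diam(\flow_s(B_\rho(x_k)))\le C\rho\exp\bigl(\tfrac{\kappa}{2}T+\sqrt{\kappa T}\,Z\bigr)$ valid for all $s\le T$ simultaneously, with a single sub-Gaussian $Z$ whose variance proxy is uniform in $T$, does not follow from iterating the pathwise two-point bound of \citep[Lemma 6]{sch09} over dyadic spatial scales and unit time steps. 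That lemma controls one fixed pair of starting points; after one time step the image of $B_\rho(x_k)$ is no longer a ball, the dyadic net chosen at time $0$ says nothing about the configuration at intermediate times, and the supremum over the continuum of pairs inside the evolving set at all times $s\le T$ has to be controlled simultaneously. This is precisely the content of Scheutzow's chaining theorem, \citep[Theorem 1]{sch09}, which the paper invokes directly: by Lemma \ref{lem:kappa} and \citep[Lemma 6]{sch09} it applies with $\sigma^2=\kappa$ and $\Lambda=\kappa/2$ and gives a tail bound for $\prob{\sup_{0\le t\le 1}\diam(\flow_{tT}(\mathcal{X}_j))>1}$ strong enough to beat the exponentially many covering sets. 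As written, your sketch amounts to re-proving that theorem, and the iteration argument you describe is not a proof of it.

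Even granting your chaining bound in the stated form, your parameter bookkeeping does not yield the threshold $\Delta t<\frac{a^2}{12\kappa}$ in the statement, so the final claim that tracking constants ``produces the stated threshold'' is unsubstantiated. You split the allowance as $\tfrac a2$ for the Brownian term and $\tfrac a2$ for the net error and require $C\rho\, e^{(\kappa/2+\gamma)T}\le\tfrac a4 T$, so the covering number is of order $e^{(\kappa+2\gamma)T}$ and $\gamma$ must be of order $\kappa$ (your first condition $\gamma^2/(2\kappa)>\kappa+2\gamma$ forces $\gamma\ge(2+\sqrt6)\kappa$); meanwhile the reflection-principle exponent for a planar Brownian motion at level $\tfrac a2T$ over a time span $\Delta t\,T$ is only about $\tfrac{a^2}{16\Delta t}T$. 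Hence your scheme needs roughly $\Delta t\lesssim a^2/(160\kappa)$, much more restrictive than $a^2/(12\kappa)$. The paper gets the better constant by a different allocation: the covering sets are only required to keep diameter at most $1$ (not of order $aT$), so the Brownian term retains essentially the full allowance $Ta-1$ and exponent $\tfrac{a^2}{4\Delta t}T$, to be compared with a net of size $Le^{3\kappa T}$, which is where the $12\kappa$ comes from. Since in its later applications the partition can be chosen arbitrarily fine, your weaker threshold would still be serviceable there, but it does not prove the lemma as stated.
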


\begin{pf}
Denote by $N( \mathcal{X}, \delta)$ the minimal number of closed
balls of radius $\delta> 0$ needed to cover $\mathcal{X}$. Let
$\mathcal{X}_j$, $j = 1, \ldots, N(\mathcal{X},e^{-3\kappa T})$ be
compact sets of diameter at most $e^{-3\kappa T}$, which cover
$\mathcal{X}$, and choose arbitrary points $x_j \in\mathcal{X}_j$.
Then there exists a constant $L > 0$ (depending only on $\mathcal{X}$)
such that
\begin{eqnarray*}
N(\mathcal{X},e^{-3\kappa T}) \leq L e^{3\kappa T}.
\end{eqnarray*}
We have
\begin{eqnarray*}
\mathbf{P}\biggl(\sup_{x \in\mathcal{X}} \max_{i} \sup_{t_i \leq
t \leq
t_{i+1}}\biggl| \frac{1}{T}\flow_{t_iT}(x)-\frac{1}{T}\flow
_{tT}(x)\biggr| > a\biggr)
\leq P_1 + P_2,
\end{eqnarray*}
where
\begin{eqnarray*}
P_1 := L e^{3\kappa T}n \max_{i,j} \mathbf{P}\Bigl(\sup_{t_i \leq t
\leq
t_{i+1}}| \flow_{t_iT}(x_j)-\flow_{tT}(x_j)| > Ta - 1\Bigr)
\end{eqnarray*}
and
\begin{eqnarray*}
P_2 := L e^{3\kappa T} n \max_{j} \mathbf{P}\Bigl(\sup_{0 \leq t
\leq
1}\diam(\flow_{tT}(\mathcal{X}_j)) > 1\Bigr).
\end{eqnarray*}
Because of the temporal and spatial homogeneity of the flow, and since
the one-point motion is Brownian, we get, by denoting a one-dimensional
Brownian motion by $W$,
\begin{eqnarray*}
P_1 &\leq&2Ln e^{3\kappa T} \mathbf{P}\biggl(\sup_{0 \leq s \leq
\Delta t
T}| W_s| > \frac{Ta - 1}{\sqrt{2}}\biggr)\\
&\leq&8Ln e^{3\kappa T}\frac{\sqrt{\Delta t}}{(a-1)\sqrt{2 \pi T}}
\exp\biggl(-\frac{a^2}{4\Delta t}T \biggr) \\
&=& 8 L n \frac{\sqrt{\Delta t}}{(a-1)\sqrt{2 \pi T}} \exp
\biggl( \biggl(3\kappa-\frac{a^2}{4\Delta t} \biggr)T \biggr) \to0
\end{eqnarray*}
for $T \to\infty$; see~\cite{kar91}, Problem II.8.2. On the other
hand we use Theorem 2.1 of~\cite{sch09} (see Theorem \ref
{thm:chaining} in the \hyperref[appm]{Appendix}) to bound $P_2$, which
gives an upper
bound on the exponential decay of the probability of the expansion of
exponentially shrinking sets, that is, the sets $\mathcal{X}_j$.
Because of Lemma~\ref{lem:kappa} the derivative of the quadratic
variation of the difference $M(t,x) - M(t,y)$, where $M$ is the
generating isotropic Brownian field, satisfies the Lipschitz property
with $\kappa= \max\{\beta_L;\beta_N\} > 0$. Lemma 2.6 of \cite
{sch09} ensures that Theorem 2.1 can be applied with $\sigma^2 =
\kappa$ and $\Lambda= \frac{\kappa}{2}$. Hence there exists $\tilde
T$ such that for $T \geq\tilde T$,
\begin{eqnarray*}
P_2 &\leq& L e^{3\kappa T} n \max_{j} \mathbf{P}\Bigl(\sup_{x,y
\in
\mathcal{X}_j} \sup_{0\leq s \leq T} | \flow_s(x) - \flow_s(y)| >
1\Bigr) \\
&\leq& L e^{3\kappa T} n \exp\biggl(- \biggl(\frac{1}{2\kappa}
\biggl(3\kappa-\frac{\kappa}{2} \biggr)^2 + \frac{\kappa}{16}
\biggr)T \biggr)
=L n \exp\biggl(- \frac{\kappa}{16}T \biggr) \to0,
\end{eqnarray*}
for $T \to\infty$, which completes the proof.
\end{pf}

The next lemma shows that it is sufficient to analyze the Lipschitz
behavior of the time-scaled trajectories to get rid of the infimum over
all Lipschitz functions.

%
\begin{lemma}\label{lem:triineq}
For any $\eps> 0$, $\mathcal{X} \subseteq\R^2$ and any partition $0
= t_0 < t_1 <\cdots< t_n = 1$ of $[0,1]$, we have
\begin{eqnarray*}
\biggl\{\sup_{x \in\mathcal{X}} \inf_{f \in\Lip(K)} \max_{i}
\biggl|
\frac{1}{T}\flow_{t_{i} T}(x) - f(t_i)\biggr| > \frac{\eps}{3}
 \biggr\}
\subseteq S_1 \cup S_2,
\end{eqnarray*}
where
\begin{eqnarray*}
S_1 &:= & \biggl\{\sup_{x \in\mathcal{X}} \max_i \frac
{1}{(t_{i+1}-t_i)} \biggl| \frac{1}{T}\flow_{t_iT}(x) - \frac
{1}{T}\flow
_{t_{i+1}T}(x)\biggr| > \biggl(K + \frac{\eps}{3} \biggr) \biggr\}
\end{eqnarray*}
and
\begin{eqnarray*}
S_2 &:= & \biggl\{\sup_{x \in\mathcal{X}} \max_i \biggl| \frac
{1}{t_iT}\flow_{t_iT}(x)\biggr| > \biggl(K + \frac{\eps}{3} \biggr)
\biggr\}.
\end{eqnarray*}
\end{lemma}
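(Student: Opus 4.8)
\emph{Proof idea.} The assertion is a pathwise (deterministic) set inclusion, so the plan is to argue by complementation: writing $A$ for the event on the left-hand side, the inclusion $A\subseteq S_1\cup S_2$ is equivalent to $S_1^c\cap S_2^c\subseteq A^c$, and hence it suffices to show that on $S_1^c\cap S_2^c$ one has $\sup_{x\in\mathcal X}\inf_{f\in\Lip(K)}\max_i\abs{\tfrac1T\flow_{t_iT}(x)-f(t_i)}\le\tfrac\eps3$. First I would fix a realisation in $S_1^c\cap S_2^c$, fix $x\in\mathcal X$, and write $y_i:=\tfrac1T\flow_{t_iT}(x)\in\R^2$; unwinding the definitions of $S_1$ and $S_2$ then yields the two elementary bounds $\abs{y_i-y_{i-1}}\le(t_i-t_{i-1})(K+\tfrac\eps3)$ and $\abs{y_i}\le t_i(K+\tfrac\eps3)$, valid for every $i$. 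In particular the second bound at $i=0$ reads $\abs{y_0}\le t_0(K+\tfrac\eps3)=0$, so $y_0=0$.

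Next I would exhibit an explicit competitor $f\in\Lip(K)$ for this $x$: take $f$ to be the piecewise-linear interpolant, on the nodes $t_0<t_1<\dots<t_n$, of the mildly contracted points $z_i:=\lambda y_i$ with contraction factor $\lambda:=K/(K+\tfrac\eps3)\in(0,1)$ (recall $K>0$). Then $f(0)=\lambda y_0=0$, and on each subinterval $[t_{i-1},t_i]$ the function $f$ is affine with slope of Euclidean length $\tfrac{\abs{z_i-z_{i-1}}}{t_i-t_{i-1}}=\lambda\,\tfrac{\abs{y_i-y_{i-1}}}{t_i-t_{i-1}}\le\lambda(K+\tfrac\eps3)=K$ by the first bound. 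Since the Lipschitz constant of a broken line equals the largest of the slopes of its linear pieces, $f$ is $K$-Lipschitz and $f(0)=0$, hence $f\in\Lip(K)$.

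It then remains to estimate the discrepancy at the nodes, which I would do directly: $\abs{y_i-f(t_i)}=(1-\lambda)\abs{y_i}=\tfrac{\eps/3}{K+\eps/3}\abs{y_i}\le\tfrac{\eps/3}{K+\eps/3}\,t_i(K+\tfrac\eps3)=t_i\,\tfrac\eps3\le\tfrac\eps3$, using the second bound and $t_i\le1$. Maximising over $i$ and then taking the supremum over $x\in\mathcal X$ gives the desired inclusion. I do not expect a real obstacle here — the whole argument is an elementary geometric construction; the two things one must get right are the calibration of the contraction factor ($\lambda$ must be small enough that $S_1^c$ pushes the interpolant into $\Lip(K)$, i.e.\ $\lambda(K+\tfrac\eps3)\le K$, yet large enough that $S_2^c$ keeps the approximation error at most $\tfrac\eps3$, i.e.\ $(1-\lambda)(K+\tfrac\eps3)\le\tfrac\eps3$, both attained with equality by $\lambda=K/(K+\tfrac\eps3)$) and the bookkeeping at the initial node $t_0=0$, where the $i=0$ instance of the $S_2^c$ bound conveniently pins $y_0$ to the origin so that $f(0)=0$ is automatic.
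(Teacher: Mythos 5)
Your core construction is exactly the paper's: on $S_1^c\cap S_2^c$ you interpolate the contracted nodes $\lambda y_i$ with $\lambda=K/(K+\eps/3)$, the increment bound from $S_1^c$ yields Lipschitz constant $\lambda(K+\eps/3)=K$, and the position bound from $S_2^c$ yields the node error $(1-\lambda)\abs{y_i}\le t_i\,\eps/3\le\eps/3$. The one place where you diverge is the initial node, and that step is the weak point. Since $\flow_{t_0T}(x)=\flow_{0}(x)=x$, you have $y_0=x/T$, which is not $0$ in general. If the index $i=0$ were genuinely included in $S_2$, then $S_2$ would contain every realisation as soon as $\mathcal{X}\ne\{0\}$ (because $\abs{x}/T>0=t_0(K+\eps/3)$), making the inclusion vacuously true but useless; and in the lemma's actual application (the bound on $P_2$ in the proof of Theorem \ref{thm:upperbound}) the sum runs over $i\ge1$, so the meaningful reading of $S_2$ excludes $i=0$. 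Under that reading your claim ``$S_2^c$ pins $y_0$ to the origin'' fails, your interpolant has $f(0)=\lambda x/T\ne0$, hence $f\notin\Lip(K)$ (membership requires $f(0)=0$), and the discrepancy at $t_0$ is left uncontrolled.

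The paper's proof patches precisely this point: it sets $f_x(0)=0$ by definition (not $\lambda y_0$), controls the slope on the first subinterval by the node bound at $i=1$, namely $\abs{f_x(t_1)-f_x(0)}=\lambda\abs{y_1}\le\lambda\,t_1(K+\eps/3)=K t_1$, and treats the remaining error at the initial node, $\abs{x}/T$, by taking $T$ large (the initial set being bounded in the application), which is why the paper's argument carries the caveat ``for $T$ large''. With this modification — $f(0):=0$, first slope via the $i=1$ position bound, initial-node error $\abs{x}/T\le\eps/3$ for $T$ large — your proof coincides with the paper's; as written, the $t_0$ bookkeeping is a genuine gap rather than a convenience.
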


\begin{pf}
Let $x \in\mathcal{X}$. Then
\begin{eqnarray*}
\max_i \frac{1}{(t_{i+1}-t_i)} \biggl| \frac{1}{T}\flow_{t_iT}(x)
- \frac
{1}{T}\flow_{t_{i+1}T}(x)\biggr| \leq\biggl(K + \frac{\eps}{3}
\biggr)
\end{eqnarray*}
and
%
%
\begin{eqnarray}\label{eq:Totalboundonflow}
\max_i \biggl| \frac{1}{t_iT}\flow_{t_iT}(x)\biggr| \leq\biggl(K
+ \frac{\eps
}{3} \biggr)
\end{eqnarray}
imply that the function $f_x$, defined by
\begin{eqnarray*}
f_x(0) = 0 \quad\mbox{and}\quad f_x(t_i) := \frac{1}{T}\flow
_{t_iT}(x)\frac{K}{ (K + \fraca{\eps}{3} )},
\qquad i \in\{
1, \ldots, n\}
\end{eqnarray*}
and linear interpolation for $t \in(t_i, t_{i+1})$, is Lipschitz
continuous with Lipschitz constant $K$, hence $f_x\in\Lip(K)$.
Further, by \rref{eq:Totalboundonflow} and definition of $f_x$, we have
\begin{eqnarray*}
\max_i \biggl| \frac{1}{T}\flow_{t_iT}(x) - f_x(t_i)\biggr| \leq
\frac{\eps}{3},
\end{eqnarray*}
which completes the proof by taking complements and unifying over all
$x \in\mathcal{X}$.\vadjust{\goodbreak}
\end{pf}

Finally we provide the proof of Theorem~\ref{thm:upperbound}.

\begin{pf*}{Proof of Theorem~\ref{thm:upperbound}}
For any partition $0 = t_0 < t_1 <\cdots< t_n = 1$ of $[0,1]$ with
\begin{eqnarray*}
\Delta t := \max_i\{t_{i+1} - t_i\} \leq\min\biggl\{\frac{\eps
}{3(K + \fraca{\eps}{3})}; \frac{\eps^2}{108\kappa} \biggr\},
\end{eqnarray*}
by the triangle inequality and according to Lemma~\ref{lem:triineq},
we have
\begin{eqnarray*}
\mathbf{P}\Bigl(\sup_{g \in F_T(\mathcal{X})}d (g, \Lip(K) )
> \eps\Bigr) &=& \mathbf{P}\biggl(\sup_{x \in\mathcal{X}} \inf
_{f \in\Lip
(K)} \biggl\|\frac{1}{T}\flow_{\cdot T}(x) - f\biggr\|_\infty>
\eps\biggr) \\&\leq&
P_1 + P_2 + P_3,
\end{eqnarray*}
where
\begin{eqnarray*}
P_1 &:=& \mathbf{P}\biggl(\sup_{x \in\mathcal{X}} \max_i \frac
{1}{(t_{i+1} - t_i)} \biggl| \frac{1}{T}\flow_{t_iT}(x) - \frac
{1}{T}\flow
_{t_{i+1}T}(x)\biggr| > K \biggl(1 + \frac{\eps}{3} \biggr)\biggr)
\end{eqnarray*}
and
\begin{eqnarray*}
P_2 &:=& \mathbf{P}\biggl(\sup_{x \in\mathcal{X}} \max_i \biggl|
\frac
{1}{t_iT}\flow_{t_iT}(x)\biggr| > K \biggl(1 + \frac{\eps}{3}
\biggr)\biggr)
\end{eqnarray*}
and
\begin{eqnarray*}
P_3 &:=& \mathbf{P}\biggl(\sup_{x \in\mathcal{X}} \max_i \sup
_{t_i \leq
t \leq t_{i+1}} \biggl| \frac{1}{T}\flow_{t_iT}(x) - \frac
{1}{T}\flow
_{tT}(x)\biggr| > \frac{\eps}{3}\biggr).
\end{eqnarray*}
According to Lemma~\ref{lem:chaining}, since $\Delta t \leq\frac
{\eps^2}{108\kappa}$, we immediately get $P_3 \to0$. According to
\rref{cor:upperbound} we have
\begin{eqnarray*}
P_2 \leq\sum_{i = 1}^n \mathbf{P}\biggl(\flow_{t_iT}(\mathcal{X})
\nsubseteq t_i T \biggl(1 + \frac{\eps}{3} \biggr) \mathcal
{B}\biggr) \to0,
\end{eqnarray*}
where $\mathcal{B}$ denotes the unit ball with respect to the stable
norm. For the convergence of $P_1$ it hence suffices to show that for
all $i \in\{1, \ldots, n\}$,
\begin{eqnarray*}
&&\mathbf{P} \biggl(\sup_{x \in\mathcal{X}} \biggl| \frac
{1}{T}\flow
_{t_iT}(x) - \frac{1}{T}\flow_{t_{i+1}T}(x)\biggr| \\
&&\hphantom{\mathbf{P}\biggl(} > (t_{i+1} - t_i) K
\biggl(1 + \frac{\eps}{3} \biggr) \Big| \flow_{t_iT}(\mathcal
{X})\subseteq t_i
T(1+\eps
)\mathcal{B}\biggr)
\end{eqnarray*}
converges to $0$ for $T \to\infty$. Let $\tilde\eps\leq\frac
{K(1+\eps/6)\tilde\Delta t \eps}{18(4 + \eps/3)}$, where $\tilde
\Delta t := \min_i\{t_{i+1} - t_i\}$; then there exists for fixed $i
\in\{1, \ldots, n\}$ an integer $N \in\N$ and $v_1, \ldots, v_N
\in t_i (1 + \eps) \mathcal{B}$ such that
\begin{eqnarray*}
t_i T(1+\eps) \mathcal{B} \subseteq\bigcup_{j = 1}^N B_{\tilde\eps
T}(v_j T).
\end{eqnarray*}
Hence we get, using isotropy of the flow,
\begin{eqnarray*}
&&\mathbf{P} \biggl(\sup_{x \in\mathcal{X}} \biggl| \frac
{1}{T}\flow
_{t_iT}(x) - \frac{1}{T}\flow_{t_{i+1}T}(x)\biggr| \\
&& \hphantom{\mathbf{P} \biggl(} > (t_{i+1} - t_i) K
\biggl(1 + \frac{\eps}{3} \biggr) \Big| \flow_{t_iT}(\mathcal
{X})\subseteq t_i T(1 + \eps
)\mathcal{B} \biggr)\\
&& \qquad \leq N \cdot\mathbf{P}\biggl(\sup_{x \in B_{\tilde\eps
T}(0)} \biggl| \frac{1}{T}x - \frac{1}{T}\flow
_{(t_{i+1}-t_i)T}(x)\biggr| >
(t_{i+1} - t_i) K \biggl(1 + \frac{\eps}{3} \biggr) \biggr)\\
&& \qquad \to0
\end{eqnarray*}
for $T \to\infty$, according to Lemma~\ref{lem:growingBepsT}. Thus
the assertion is proved.
\end{pf*}

\subsection{Lower bound} \label{sec:lowerBound}

This section is devoted to the proof of the lower bound of Theorem \ref
{thm:mainthm}, that is, the following theorem.

%
\begin{theorem} \label{thm:lowerBound}
For any $\eps> 0$ and $\mathcal{X} \in\mathcal{C}_R$, we have
\begin{eqnarray*}
\lim_{T \to\infty} \mathbf{P}\Bigl(\sup_{f \in\Lip(K)} d(f,
F_T(\mathcal{X})) > \eps\Bigr) = 0,
\end{eqnarray*}
where $K$ is the Euclidean radius of the stable norm unit ball; see
Section~\ref{sec:stablenorm}.
\end{theorem}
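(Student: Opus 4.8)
The plan is to prove the lower bound by showing that every Lipschitz function $f \in \Lip(K)$ is well-approximated, uniformly, by some time-scaled trajectory. Since $\Lip(K)$ is compact in the sup-norm, it suffices to work on a finite $\delta$-net $f_1, \dots, f_M$ of $\Lip(K)$ and, by the triangle inequality together with the (uniform) equicontinuity of Lipschitz functions, to prove that for each fixed $f_\ell$ there is, with probability tending to $1$, a point $x \in \mathcal{X}$ whose trajectory $t \mapsto \frac1T \flow_{0,tT}(x)$ stays within $\eps$ of $f_\ell$ on $[0,1]$. As in the upper bound, it further suffices to control the trajectory on a fine deterministic grid $0 = t_0 < t_1 < \dots < t_n = 1$ (controlling the oscillation between grid points is handled exactly by Lemma~\ref{lem:chaining}), so the core task is: given $\mathcal{X} \in \mathcal{C}_R$ and grid values $f_\ell(t_0), \dots, f_\ell(t_n)$, find with high probability a point $x \in \mathcal{X}$ with $\abs{\frac1T\flow_{0,t_iT}(x) - f_\ell(t_i)} $ small for every $i$.

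The mechanism for producing such a point is the stable-norm hitting-time estimate. The idea is to build the trajectory segment by segment using the flow's Markov property \rref{eq:markovprop}: starting from a large subset of $\mathcal{X}$, the set $\flow_{0,t}(\mathcal{X})$ reaches an $R$-neighbourhood of $f_\ell(t_1)T$ \emph{as a large set} at a time $\tau^R(\mathcal{X}, f_\ell(t_1)T)$ which by Lemma~\ref{lem:holger2} is, with overwhelming probability, at most $(\normR{f_\ell(t_1)} + \text{small})T$. Crucially, since $f_\ell$ is Lipschitz with constant $K = 1/\normR{e_1}$, we have $\abs{f_\ell(t_1) - f_\ell(t_0)} \le K t_1$, i.e. $\normR{f_\ell(t_1)} \le t_1$, so the hitting happens at a flow-time $\le t_1 T$ (up to a negligible correction absorbed by choosing the grid fine and $\eps$-budget generously). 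One then restricts to the large subset of $\flow_{0,\tau}(\mathcal{X})$ lying in $B_R(f_\ell(t_1)T)$, applies the Markov property, and iterates: reach $B_R(f_\ell(t_2)T)$ from there within additional flow-time $\le (t_2 - t_1)T$, and so on. After $n$ steps we have identified (with probability $\ge 1 - n\cdot\const{1}_m T^{-m} - n\cdot\const{2}_m T^{-m}$, using also Lemma~\ref{lem:diameter} to keep the sets large throughout) a nested sequence of nonempty compact sets; any point $x$ in the final one has a trajectory passing within Euclidean distance $\approx R$ (hence $\approx 0$ after dividing by $T$) of each $f_\ell(t_i)T$ — but only at the \emph{stopping times} $\tau_i$, not at $t_iT$ itself.

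The main obstacle is exactly this mismatch between stopping times and deterministic times: the construction naturally produces a point that visits the prescribed locations at random times $\tau_1 \le \tau_2 \le \dots$, each close to but not equal to $t_iT$, whereas the claim demands closeness at the deterministic grid times $t_iT$. I would resolve this by a reparametrization-plus-slack argument. First, by Lemma~\ref{lem:holger2} the $\tau_i$ are each within $\eta T$ of their targets $\sum_{j\le i}\normR{f_\ell(t_j)-f_\ell(t_{j-1})}T \le t_i T$, and one can arrange the grid so that $\tau_i \le t_i T$ always (pad each target slightly); the leftover flow-time $t_iT - \tau_i \le C\eta T$ is short, and during a short time interval the trajectory cannot move far — this is again controlled by Lemma~\ref{lem:chaining} (or directly, since the one-point motion is Brownian and $C\eta$ can be made $\ll \eps^2/\kappa$). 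Thus at the deterministic time $t_iT$ the point is still within $(R + C'\sqrt{\eta}\,)T = o(T) + \eps T/3$ of $f_\ell(t_i)T$, giving the desired bound after scaling. Finally, the $\delta$-net reduction requires that the whole estimate be uniform over $f \in \Lip(K)$: since the net is finite and each $f_\ell$-estimate costs only a polynomially small probability, a union bound over $\ell = 1, \dots, M$ (with $M$ fixed once $\delta$ is fixed) closes the argument. One must check the bookkeeping so that all the small $\eps$-budgets — grid mesh, net scale $\delta$, the slack $\eta$, and the $R$-neighbourhood — together contribute less than $\eps$; this is the kind of routine-but-delicate constant-chasing that, as in the proof of Theorem~\ref{thm:upperbound}, I would organize by fixing $\Delta t$ and $\eta$ as explicit functions of $\eps$, $K$, and $\kappa$ before running the estimates.
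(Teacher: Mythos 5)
Your overall strategy (finite net by compactness of $\Lip(K)$, a fine deterministic grid plus Lemma~\ref{lem:chaining} for the oscillation between grid points, and successive stable-norm hitting times glued by the Markov property) is the same as the paper's, but the way you reconcile the random hitting times $\tau_i$ with the deterministic grid times $t_iT$ has a genuine gap. You assert that $\tau_i$ is within $\eta T$ of $\sum_{j\le i}\normR{f_\ell(t_j)-f_\ell(t_{j-1})}\,T \le t_iT$ and conclude that the leftover time $t_iT-\tau_i$ is at most $C\eta T$. That conclusion only holds when $f_\ell$ moves at essentially full stable-norm speed on every grid interval; in general the sum of stable norms can be far below $t_i$ (take $f_\ell$ constant on $[0,1/2]$ and at full speed afterwards), so the leftover time is of order $T$, not $\eta T$. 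Then neither Lemma~\ref{lem:chaining} nor a short-time Brownian estimate applies, and worse, your chained construction keeps pushing the selected point toward the \emph{later} targets during that leftover window: the point you produce traverses the image of $f_\ell$ ahead of schedule, so at the deterministic time $t_iT$ it is typically near $f_\ell$ evaluated at a later time, i.e.\ you only get closeness up to a time reparametrization, which is not what the sup-norm statement requires.

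The paper's Lemma~\ref{lem:discreteEst} closes exactly this hole by re-synchronizing at every grid time: after the set hits $B_R(Tf(t_i))$ at the stopping time (which happens before $(t_i-t_{i-1})T$ with probability tending to one because the net functions are taken in $\Lip(K-\eps/4)$, strictly below the critical constant --- note your ``pad each target slightly'' needs to be made precise in exactly this way, e.g.\ replacing $f$ by $\frac{K-\eps/4}{K}f$, since Lemma~\ref{lem:holger2} gives no control at the critical speed), one demands that some point of the hitting set \emph{stays} within $T^{2/3}$ of $Tf(t_i)$ during the entire remaining time up to $t_iT$; this is cheap because the one-point motion is Brownian and moves only $O(\sqrt{T})\ll T^{2/3}$ over a time of order $T$. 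The next hitting problem is then restarted at the deterministic time $t_iT$ from the set $\flow_{t_{i-1}T,t_iT}(\mathcal{X}_{i-1}^{(T)})\cap B_{T^{2/3}}(Tf(t_i))$, kept large via Lemma~\ref{lem:diameter}, so the waiting is built into the induction and slow stretches of $f$ cause no mismatch. Without this waiting/re-synchronization step (or an equivalent device), your argument does not prove the theorem as stated.
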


The proof of Theorem~\ref{thm:lowerBound} is divided into several
steps. Since the Lipschitz functions are compact with respect to the
supremum norm, the problem can be reduced to a finite set of Lipschitz
functions; see the proof of Theorem~\ref{thm:lowerBound}. The main
idea is then to show that for any given Lipschitz function, there
exists a point in the initial set such that the image of this point,
under the action of the flow, approximates the Lipschitz function on
a discrete grid (Lemma~\ref{lem:discreteEst}). Further, Lemma \ref
{lem:chaining} shows that between two supporting points, if chosen
sufficiently close, the trajectories move not too quickly.

%
\begin{lemma} \label{lem:discreteEst}
For any $\eps> 0$, $f \in\Lip(K-\eps)$, $\mathcal{X} \in\mathcal
{C}_R$ and any partition $0 = t_0 < t_1 <\cdots< t_n = 1$ of $[0,1]$,
we have
\begin{eqnarray*}
\lim_{T \to\infty} \mathbf{P}\biggl(\inf_{x \in\mathcal{X}}
\max_{i}
\biggl| \frac{1}{T}\flow_{t_iT}(x) - f(t_i)\biggr| \leq\eps\biggr
) = 1.
\end{eqnarray*}
\end{lemma}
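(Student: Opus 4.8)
The plan is to build, by induction on $i=0,1,\dots,n$, a decreasing sequence $\mathcal X=\Gamma_0\supseteq\Gamma_1\supseteq\dots\supseteq\Gamma_n$ of non-empty compact connected subsets of $\mathcal X$ such that, on an event of probability tending to $1$, the image $\flow_{t_iT}(\Gamma_i)$ is large (diameter $\ge1$) and contained in the Euclidean ball $B_{\eps'T}(f(t_i)T)$, where $\eps':=\tfrac\eps4\tilde\Delta t$ and $\tilde\Delta t:=\min_i(t_{i+1}-t_i)>0$ is the mesh of the fixed partition. Granting this, on the intersection of the finitely many good events we may pick $x\in\Gamma_n$; since $\Gamma_n\subseteq\Gamma_i$ for every $i$ we obtain $\flow_{t_iT}(x)\in\flow_{t_iT}(\Gamma_i)\subseteq B_{\eps'T}(f(t_i)T)$, hence $\max_i\abs{\tfrac1T\flow_{t_iT}(x)-f(t_i)}\le\eps'\le\eps$, which is the assertion. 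The base case $i=0$ is immediate, since $\Gamma_0:=\mathcal X$ is large and connected by hypothesis and, as an element of $\mathcal C_R$, lies in $B_{2R}(0)\subseteq B_{\eps'T}(0)$ once $T$ is large.

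The construction is driven by one strict inequality: since $\mathcal B$ is the Euclidean ball of radius $K$ we have $\normR v=\abs v/K$, so $f\in\Lip(K-\eps)$ gives $\normR{f(t_i)-f(t_{i-1})}\le(1-\eps/K)(t_i-t_{i-1})$, which is smaller than the available rescaled time $t_i-t_{i-1}$ with a margin of order $\eps\tilde\Delta t$. For the inductive step assume $\flow_{t_{i-1}T}(\Gamma_{i-1})$ is large and contained in $B_{\eps'T}(f(t_{i-1})T)$. Fix (measurably) a connected sub-piece $\eta_{i-1}\subseteq\flow_{t_{i-1}T}(\Gamma_{i-1})$ of diameter in $[1,R]$; being of diameter $\le R$ it lies in $B_{2R}$ of a suitable centre $c_{i-1}$ with $\abs{c_{i-1}-f(t_{i-1})T}\le\eps'T$, so after translation it belongs to $\mathcal C_R$. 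Let $\sigma_i\ge t_{i-1}T$ be the first time at which $\flow_{t_{i-1}T,\sigma_i}(\eta_{i-1})$ meets $B_R(f(t_i)T)$ as a large set. Conditioning on $\mathcal F_{0,t_{i-1}T}$, using the Markov property \rref{eq:markovprop} together with spatial homogeneity and isotropy to place the displacement on the $e_1$-axis, and covering the finitely many possible (bounded, of size $O(T)$) displacement vectors by a polynomial net exactly as in the proof of Lemma \ref{lem:convBepsT}, Lemma \ref{lem:holger2} and Lemma \ref{lem:diameter} yield $\prob{\sigma_i>t_iT}\le\const{1}_mT^{-m}$ for every $m$: indeed $\normR{f(t_i)T-c_{i-1}}/T\le(1-\eps/K)(t_i-t_{i-1})+\eps'/K$, and $\eps'/K+\delta'<(\eps/K)\tilde\Delta t$ for the slack $\delta'$ coming from Lemma \ref{lem:holger2} and our small $\eps'$. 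This is where the Lipschitz deficiency $\eps$ is consumed and where $\eps'$ is forced to be small relative to the mesh.

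The main obstacle is passing from ``has met $B_R(f(t_i)T)$ by the random time $\sigma_i<t_iT$'' to ``has a large connected piece near $f(t_i)T$ at the deterministic time $t_iT$'', since between $\sigma_i$ and $t_iT$ the image could a priori spread out linearly. This is handled by following a single point. Choose $q_i\in\flow_{\sigma_i}(\Gamma_{i-1})\cap B_R(f(t_i)T)$; by \rref{eq:markovprop} the process $s\mapsto\flow_{\sigma_i,\sigma_i+s}(q_i)$ is, conditionally on the past, an ordinary two-dimensional Brownian motion (covariance $\id$), so $\sup_{\sigma_i\le s\le t_iT}\abs{\flow_{\sigma_i,s}(q_i)-q_i}\le\sqrt T\log T$ with super-polynomially small failure probability (reflection principle, using $t_iT-\sigma_i\le T$). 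On this event $\flow_{t_iT}(\Gamma_{i-1})\ni\flow_{\sigma_i,t_iT}(q_i)$ lies in $B_{R+\sqrt T\log T}(f(t_i)T)\subseteq B_{\eps'T/4}(f(t_i)T)$ for $T$ large; by Lemma \ref{lem:diameter} the set $\flow_{t_iT}(\Gamma_{i-1})$ is large, and it is connected as a continuous image of $\Gamma_{i-1}$. Finally trim: if $\flow_{t_iT}(\Gamma_{i-1})\subseteq\overline{B_{\eps'T}(f(t_i)T)}$ put $\Lambda_i:=\flow_{t_iT}(\Gamma_{i-1})$; otherwise let $\Lambda_i$ be the connected component of $\flow_{t_iT}(\Gamma_{i-1})\cap\overline{B_{\eps'T}(f(t_i)T)}$ containing $\flow_{\sigma_i,t_iT}(q_i)$, which by the boundary-bumping property of continua meets $\partial B_{\eps'T}(f(t_i)T)$ and hence has diameter $\ge\eps'T-\eps'T/4\ge1$ for $T$ large. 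In either case $\Lambda_i$ is a subcontinuum of $\flow_{t_iT}(\Gamma_{i-1})$ contained in $\overline{B_{\eps'T}(f(t_i)T)}$ with diameter $\ge1$; since $\flow_{0,t_iT}$ is a homeomorphism, $\Gamma_i:=\flow_{0,t_iT}^{-1}(\Lambda_i)$ is a non-empty compact connected subset of $\Gamma_{i-1}$ with $\flow_{t_iT}(\Gamma_i)=\Lambda_i$, closing the induction. All error probabilities accumulated over the $n$ steps are $O(T^{-m})$ for every $m$, so a union bound finishes the proof. Besides routine bookkeeping, the genuine subtleties are exactly this holding argument --- reducing a statement at the precise time $t_iT$ to the hitting statements, which only control some random time $\le t_iT$, via the one-point motion --- and fixing $\eps'$ in terms of the mesh $\tilde\Delta t$ in advance so that the tolerance does not accumulate across steps.
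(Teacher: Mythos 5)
Your proposal is correct and follows essentially the same route as the paper's proof: induction along the partition with a nested family of subsets of $\mathcal{X}$ whose images track $Tf(t_i)$, the hitting estimates of Lemma \ref{lem:holger2} exploiting the slack $\normR{f(t_i)-f(t_{i-1})}\leq(1-\eps/K)(t_i-t_{i-1})$ from $f\in\Lip(K-\eps)$, a single Brownian one-point motion held near $Tf(t_i)$ from the random hitting time up to the deterministic time $t_iT$, diameter control via Lemma \ref{lem:diameter}, and chaining through the Markov property \rref{eq:markovprop}. The differences are only bookkeeping (tolerance $\eps' T$ with a net over the random displacement versus the paper's $T^{2/3}$-neighbourhoods with an extra hitting step for the offset, and your boundary-bumping trim versus the paper's largest connected component), not a different argument.
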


\begin{pf}
Consider the following sequence of random subsets of $\R^2$:
\begin{eqnarray*}
\mathcal{X}_0^{(T)} &:=& \mathcal{X},\\
\mathcal{X}_i^{(T)} &:=& \flow_{t_{i-1}T,t_iT} \bigl(\mathcal
{X}_{i-1}^{(T)} \bigr) \cap B_{T^{2/3}}(Tf(t_i))
\end{eqnarray*}
for $i=1,\ldots,n$, which is the part of $\flow_{t_iT}(\mathcal{X})$
that has been close (in linear scaling) to $Tf(t_j)$ for all $0 \leq j
\leq i$. Further define\vadjust{\goodbreak} the set [abbreviating $\tau^R (\mathcal
{X}_{i-1}^{(T)}, Tf(t_i),Tt_{i-1} )$ by $\tau^R_i$]
\begin{eqnarray*}
\gamma_i^{(T)} &:=& \flow_{t_{i-1}T,t_{i-1}T+\tau^R_i} \bigl
(\mathcal
{X}_{i-1}^{(T)} \bigr) \cap B_{2R}(Tf(t_i))
\end{eqnarray*}
for $i=1,\ldots,n$, which is the part of $\mathcal{X}_{i-1}^{(T)}$
that is at first in a $2R$-neighborhood of $Tf(t_i)$. Observe that
$\mathcal{X}_{i-1}^{(T)} \neq\varnothing$ implies that $\tau^R_i$ is
almost surely finite. To simplify notation we will denote the largest
(with respect to the diameter) connected component of $\mathcal
{X}_i^{(T)}$ and $\gamma_i^{(T)}$, respectively, by the same symbol.
Let $A_i^{(T)}$ be the event that $\mathcal{X}_{i-1}^{(T)}$ reaches an
$R$-neighborhood of $Tf(t_i)$ in time, that is,
\begin{eqnarray*}
A_i^{(T)} &:= & \bigl\{\tau^R \bigl(\mathcal{X}_{i-1}^{(T)},
Tf(t_i),Tt_{i-1} \bigr) \leq(t_i - t_{i-1})T \bigr\}
\end{eqnarray*}
for $i=1,\ldots,n$, and $B_i^{(T)}$, the event that there exists a
point in the first intersection of $\mathcal{X}_{i-1}^{(T)}$ with an
$R$-neighborhood of $Tf(t_i)$ that stays close (in linear scaling) to
$Tf(t_i)$ up to time $t_iT$, and $\mathcal{X}_{i-1}^{(T)}$ is large at
time $t_i$, that is, on
\begin{eqnarray*}
\bigl\{\tau^R \bigl(\mathcal{X}_{i-1}^{(T)}, Tf(t_i),Tt_{i-1}
\bigr) \leq(t_i - t_{i-1})T \bigr\},
\end{eqnarray*}
that is [abbreviating $\tau^R (\mathcal{X}_{i-1}^{(T)},
Tf(t_i),Tt_{i-1} )$ by $\tau^R_i$],
\begin{eqnarray*}
B_i^{(T)} &:=& \Bigl\{\inf_{x \in\gamma_{i-1}^{(T)}}\sup_{t_{i-1}T
+ \tau^R_i \leq t \leq t_iT}| \flow_{t_{i-1}T + \tau^R_i,t}(x) -
Tf(t_i)| \leq T^{2/3} ; \\
&&\hspace*{120pt}\hphantom{\Bigl\{} \diam\bigl(\flow_{t_{i-1}T,
t_iT}\bigl(\mathcal
{X}_{i-1}^{(T)} \bigr) \bigr) \geq1 \Bigr\}.
\end{eqnarray*}
Hence we get by construction that if there exists $x \in\mathcal{X}$
such that $\flow_{\cdot}(x)$ reaches successively the
$R$-neighborhoods of $Tf(t_i)$ for all  $i\in\{1, \ldots, n\}$ in time
(before time $t_iT$) and is still close to these points at time $t_iT$,
then the time-scaled trajectory $\frac{1}{T}\flow_{\cdot T}(x)$
starting in this particular $x$ is close to the Lipschitz function $f$
at the time $t_i$ for all $i \in\{0,\ldots, n\}$, that is,
%
%
\begin{eqnarray} \label{eq:estimateAB}
&&\mathbf{P}\biggl(\inf_{x \in\mathcal{X}} \max_{i} \biggl|
\frac{1}{T}\flow
_{t_iT}(x) - f(t_i)\biggr| \leq\eps\biggr)\nonumber\\
 && \qquad \geq\mathbf{P}\Biggl
(\bigcap_{i=1}^n
A_i^{(T)} \cap\bigcap_{i=1}^n B_i^{(T)}\Biggr)\\
&& \qquad = \mathbf{P}\bigl(A_1^{(T)}\bigr)\mathbf{P}\bigl
(B_1^{(T)} |
A_1^{(T)}\bigr)\cdots\mathbf{P}\Biggl(B_n^{(T)} \Big| \bigcap_{i=1}^n
A_i^{(T)} \cap\bigcap_{i=1}^{n-1} B_i^{(T)}\Biggr).
\nonumber
\end{eqnarray}
Observe that the conditional distribution $\mathcal{L} (\tau
^R (\mathcal{X}_{i-1}^{(T)}, Tf(t_i),Tt_{i-1} )
|\mathcal{X}_{i-1}^{(T)} )$ coincides with the conditional
distribution $\mathcal{L} (\tau^R (\mathcal
{X}_{i-1}^{(T)}, Tf(t_i) ) |\mathcal{X}_{i-1}^{(T)} )$
for $i \in\{1,\ldots, n\}$, and hence the results from
Section \ref
{sec:stablenorm} are applicable.

For any $k \in\{1,\ldots, n\}$, because of the Markov property \rref
{eq:markovprop} of the flow, we have
%
%
\begin{eqnarray} \label{eq:estimateOnA1}
&&\mathbf{P}\Biggl(A_k^{(T)}\Big|\bigcap_{i=1}^{k-1} A_i^{(T)} \cap
\bigcap_{i=1}^{k-1} B_i^{(T)}\Biggr)\notag\\
&& \qquad = \mathbf{P}\Biggl(\tau^R \bigl(\mathcal
{X}_{k-1}^{(T)},Tf(t_k),Tt_{k-1} \bigr) \leq(t_k - t_{k-1})T
\Big| \bigcap_{i=1}^{k-1} A_i^{(T)} \cap\bigcap_{i=1}^{k-1}
B_i^{(T)}\Biggr)
\nonumber
\\[-8pt]
\\[-8pt]
&& \qquad \geq\inf_{\gamma\in\mathcal{C}_R} \inf_{v \in
B_{1}(0)} \mathbf{P}\bigl(\tau^R \bigl(\gamma,T\bigl(f(t_k) -
f(t_{k-1})\bigr) +
vT^{2/3} \bigr) \leq(t_k - t_{k-1})T \bigr)\notag\\
&& \qquad \geq1 - \sup_{\gamma\in\mathcal{C}_R} \mathbf
{P}\biggl(\tau^R \bigl(\gamma,T\bigl(f(t_k) - f(t_{k-1})\bigr)
\bigr) > (t_k -
t_{k-1})\frac{T}{1+\eps/K}\biggr)\notag\\
&& \qquad \quad{} -\sup_{\gamma\in\mathcal{C}_R} \sup_{v \in
B_{1}(0)} \mathbf{P}\biggl(\tau^R(\gamma,vT^{2/3}) > (t_k -
t_{k-1})\frac
{\eps}{1+\eps/K}T\biggr).
\nonumber
\end{eqnarray}
Because of the isotropy of the flow, the last probability reduces to
\begin{eqnarray*}
\sup_{\gamma\in\mathcal{C}_R} \mathbf{P}\biggl(\tau^R(\gamma
,e_1T^{2/3}) > (t_k - t_{k-1})\frac{\eps}{1+\eps/K}T\biggr) \to0,
\end{eqnarray*}
and converges to $0$ according to Lemma~\ref{lem:holger2}. Since $f
\in\Lip(K-\eps)$ and $| v| = K \|v\|^R$, we have $\|f(t_k) -
f(t_{k-1})\|^R \leq(t_k - t_{k-1}) (1-\frac{\eps}{K} )$,
which\ implies, because of Lemma~\ref{lem:holger2},
\begin{eqnarray*}
&&\hspace*{-2pt}\sup_{\gamma\in\mathcal{C}_R} \mathbf{P}\biggl(\tau^R \bigl
(\gamma
,T\bigl(f(t_k) - f(t_{k-1})\bigr) \bigr) > (t_k - t_{k-1})\frac
{T}{1+\eps/K}\biggr)\\
&& \hspace*{-2pt}\qquad \leq\sup_{\gamma\in\mathcal{C}_R} \mathbf{P}\biggl
(\tau
^R \bigl(\gamma,T\bigl(f(t_k) - f(t_{k-1})\bigr) \bigr)  > \|f(t_k) - f(t_{k-1})\|^R\frac{1}{1 - (\eps
/K)^2}T \biggr)\notag\\
&& \hspace*{-2pt}\qquad \to0,\notag
\end{eqnarray*}
and hence convergence to $0$ of the first probability in \rref
{eq:estimateOnA1}.
On the other hand, we get for $k \in\{1,\ldots, n\}$, by fixing some
$\tilde x_{k-1} \in\gamma_{k-1}^{(T)}$ for $T$ large [abbreviating
$\tau^R (\mathcal{X}_{k-1}^{(T)}, Tf(t_k),Tt_{k-1} )$ by
$\tau^R_k$], 
%
%
\begin{eqnarray} \label{eq:estimateOnB}
&&\mathbf{P}\Biggl(B_k^{(T)}\Big|\bigcap_{i=1}^{k} A_i^{(T)} \cap
\bigcap
_{i=1}^{k-1} B_i^{(T)}\Biggr)\notag\\
&& \qquad \geq\mathbf{P} \Biggl(\sup_{t_{k-1}T + \tau^R_k \leq t
\leq t_kT} | \flow_{t_{k-1}T + \tau^R_k,t}(\tilde x_{k-1}) - Tf(t_k)|
\nonumber
\\[-8pt]
\\[-8pt]
&& \hspace*{73pt}\qquad\hphantom{\geq\mathbf{P} \Biggl(}\leq
T^{2/3} \Big|\bigcap_{i=1}^{k} A_i^{(T)} \cap\bigcap_{i=1}^{k-1}
B_i^{(T)} \Biggr)\notag\\
&& \qquad \quad{} + \mathbf{P}\Biggl(\diam\bigl(\flow_{t_{k-1}T,t_kT}
\bigl(\mathcal{X}_{k-1}^{(T)} \bigr) \bigr) \geq1 \Big| \bigcap
_{i=1}^{k} A_i^{(T)} \cap\bigcap_{i=1}^{k-1} B_i^{(T)}\Biggr) - 1.
\nonumber
\end{eqnarray}
Since the one-point motions are Brownian, the first term can be
estimated for some $\delta\in(0,1)$ via [denoting by $W =
(W^{(1)},W^{(2)} )$ a $2$-dimensional Brownian motion]
%
%
\begin{eqnarray} \label{eq:brownianEst}
&&\mathbf{P} \Biggl(\sup_{t_{k-1} + \tau^R_k \leq t \leq t_k} |
\flow
_{t_{k-1}T + \tau^R_k,tT}(\tilde x_{k-1}) - Tf(t_k)|  \leq T^{2/3}
\Big| \bigcap_{i=1}^{k} A_i^{(T)} \cap\bigcap
_{i=1}^{k-1} B_i^{(T)} \Biggr)\nonumber
\\
&& \qquad \geq\mathbf{P}\Bigl(\sup_{0 \leq t \leq t_k-t_{k-1}} |
W_{tT}| \leq(1-\delta)T^{2/3}\Bigr)
\\
&& \qquad \geq1 - 8\cdot\mathbf{P}\biggl(W^{(1)}_1 > \frac
{(1-\delta
)}{\sqrt{2(t_k - t_{k-1})}}T^{1/6}\biggr)\nonumber\\
&& \qquad  \to1;
\nonumber
\end{eqnarray}
see~\cite{kar91}, Problem II.8.2. Further, we have, because of Lemma
\ref{lem:diameter},
\begin{eqnarray*}
&&\mathbf{P}\Biggl(\diam\bigl(\flow_{t_{k-1}T,t_kT}\bigl(\mathcal
{X}_{k-1}^{(T)} \bigr) \bigr) \geq1 \Big| \bigcap_{i=1}^{k}
A_i^{(T)} \cap\bigcap_{i=1}^{k-1} B_i^{(T)}\Biggr)\\
&& \qquad \geq\inf_{\gamma\in\mathcal{C}_R} \mathbf{P}\bigl
(\diam
\bigl(\flow_{0,(t_k-t_{k-1})T}(\gamma) \bigr)\geq1\bigr)\\
&& \qquad \to1.
\end{eqnarray*}
This, together with \rref{eq:brownianEst}, yields convergence of \rref
{eq:estimateOnB} to $1$. Combining \rref{eq:estimateOnA1} and \rref
{eq:estimateOnB} via \rref{eq:estimateAB} implies the assertion.
\end{pf}

Finally we provide the proof of Theorem~\ref{thm:lowerBound}.

\begin{pf*}{Proof of Theorem~\ref{thm:lowerBound}}
Because of compactness of the Lipschitz functions with respect to the
supremum norm, we can reduce the problem to a finite set of Lipschitz
functions as follows. Since $\Lip(K-\frac{\eps}{4})$ is compact with
respect to $\|\cdot\|_\infty$ there exists some $N \in\N$ and
$f_1,\ldots,f_N \in\Lip(K-\frac{\eps}{4})$ such that for any $g
\in\Lip(K-\frac{\eps}{4})$, there exists $j \in\{1, \ldots, N\}$ with
\begin{eqnarray*}
\|g - f_j\|_\infty\leq\frac{\eps}{4}.
\end{eqnarray*}
If $f \in\Lip(K)$ then $\frac{K - \fraca{\eps}{4}}{K}f \in\Lip(K
- \frac{\eps}{4})$, and hence for any $f \in\Lip(K)$, because of $\|
f\|_\infty\leq K$, there exists $j \in\{1, \ldots, N\}$ such that
\begin{eqnarray*}
\|f - f_j\|_\infty\leq\biggl\|f - \frac{K - \fraca{\eps
}{4}}{K}f\biggr\|
_\infty+ \biggl\|\frac{K - \fraca{\eps}{4}}{K}f - f_j\biggr\|
_\infty\leq\frac
{\eps}{2}.
\end{eqnarray*}
Thus we get
%
%
\begin{eqnarray} \label{eq:finiteLipEst}
&&\mathbf{P}\biggl(\sup_{f \in\Lip(K)} \inf_{x \in\mathcal{X}}
\biggl\|\frac
{1}{T}\flow_{0,\cdot T}(x) - f\biggr\|_\infty> \eps\biggr) \notag
\\
&& \qquad =\mathbf{P}\biggl(\max_j \mathop{\mathop{\sup}_{f \in
\Lip(K)}}_{|
f - f_j|\leq\fraca{\eps}{2}} \inf_{x \in\mathcal{X}} \biggl\|
\frac
{1}{T}\flow_{0,\cdot T}(x) - f\biggr\|_\infty> \eps\biggr) \\
&& \qquad \leq\sum_{j=1}^N \mathbf{P}\biggl(\inf_{x \in\mathcal{X}}
\biggl\|\frac{1}{T}\flow_{0,\cdot T}(x) - f_j\biggr\|_\infty>
\frac{\eps
}{2}\biggr).\notag
\end{eqnarray}
Now choose a partition $0 = t_0 < t_1 <\cdots< t_n = 1$ of $[0,1]$
with $\Delta t :=\break \max_i\{t_{i+1}-t_i\}\leq\min\{\frac{\eps
^2}{768 \kappa} ; \frac{\eps}{8 K} \}$, where $\kappa:=
\max\{\beta_L;\beta_N\}$. Using the triangle inequality we get for
any $f \in\Lip(K - \frac{\eps}{4})$, since
\begin{eqnarray*}
\max_i \sup_{t_i \leq t \leq t_{i+1}}| f(t_i) - f(t)| \leq\biggl(K -
\frac{\eps}{4} \biggr) \Delta t \leq\frac{\eps}{8},
\end{eqnarray*}
the estimate
%
%
\begin{eqnarray} \label{eq:lowerboundEst}
&&\mathbf{P}\biggl(\inf_{x \in\mathcal{X}} \biggl\|\frac
{1}{T}\flow_{0,\cdot
T}(x) - f_i\biggr\|_\infty> \frac{\eps}{2}\biggr) \notag\\
&& \qquad \leq\mathbf{P}\biggl(\inf_{x \in\mathcal{X}} \max_i
\biggl|
\frac{1}{T}\flow_{0,t_iT}(x) - f(t_i)\biggr| > \frac{\eps
}{4}\biggr) \\
&& \qquad \quad{} +\mathbf{P}\biggl(\sup_{x \in\mathcal{X}}\max
_i \sup
_{t_i \leq t \leq t_{i+1}} \biggl| \frac{1}{T}\flow_{0,t_iT}(x) -
\frac
{1}{T}\flow_{0,tT}(x)\biggr| > \frac{\eps}{8}\biggr).\notag
\end{eqnarray}
Because of Lemma~\ref{lem:discreteEst} the first term in \rref
{eq:lowerboundEst} converges to $0$ for $T \to\infty$, and since
$\Delta t \leq\frac{\eps^2}{768 \kappa}$, Lemma~\ref{lem:chaining}
yields convergence of the second term to $0$. Hence combining \rref
{eq:finiteLipEst} and \rref{eq:lowerboundEst} proves the
assertion.\vspace*{-6pt}
\end{pf*}

\subsection{\texorpdfstring{Proof of Theorem \protect\ref{thm:mainthm}}{Proof of Theorem 3.1}}\label{sec:proof}

\begin{pf*}{Proof of Theorem~\ref{thm:mainthm}}
By definition of the Hausdorff distance, it is sufficient to show
%
%
\begin{equation}\label{eq:mainthmu}
\lim_{T \to\infty} \mathbf{P}\Bigl(\sup_{g \in F_T(\mathcal{X})}
d (g, \Lip( K) ) > \eps\Bigr) = 0
\end{equation}
and
%
\begin{equation}
\label{eq:mainthml}
\lim_{T \to\infty} \mathbf{P}\Bigl(\sup_{f \in\Lip( K)}
d(f,F_T(\mathcal{X})) > \eps\Bigr) = 0.
\end{equation}
For $\mathcal{X} \in\mathcal{C}_R$ equation \rref{eq:mainthmu} is
proved in Section~\ref{sec:upperBound}, namely Theorem \ref
{thm:upperbound}, whereas \rref{eq:mainthml} is proved in Section \ref
{sec:lowerBound}, namely Theorem~\ref{thm:lowerBound}.
For any nontrivial compact connected $\mathcal{X} \subseteq\R^2$ we
need to construct a scaled flow on a diffusively scaled space, such
that the diameter of $\mathcal{X}$ becomes large, and the results of
Theorem~\ref{thm:upperbound} and Theorem~\ref{thm:lowerBound} are applicable.\vadjust{\goodbreak}

Let $r := \diam(\mathcal{X}) > 0$. Define the scaled space $\tilde\R
^2 := \{\frac{x}{r}\dvtx x \in\R^2\}$ equipped with the usual Euclidean
metric, and consider the function
\begin{eqnarray*}
\tilde\flow\dvtx \R_+ \times\R_+ \times\tilde\R^2 \times
\Omega\to
\tilde\R^2; \qquad\tilde\flow_{s,t}(\tilde x,\omega) := \frac
{1}{r} \flow_{r^2s,r^2t}(r \tilde x,\omega).
\end{eqnarray*}
Since $\flow$ is an IBF on $\R^2$, we have that $\tilde\flow$ is
also an IBF on $\tilde\R^2$ with generating isotropic Brownian field
$\tilde M(t,\tilde x) = \frac{1}{r}M(r^2t,r\tilde x)$ for $t \geq0$,
$\tilde x \in\tilde\R^2$ and covariance tensor $\tilde b(\tilde x) =
b(r\tilde x)$ for $\tilde x \in\tilde\R^2$, and thus it has the same
properties as $\flow$, in particular, the top-Lyapunov exponent of
$\tilde\flow$ is strictly positive. By construction of $\tilde\R^2$
the initial set $\frac{1}{r}\mathcal{X}$ has diameter $1$, seen as a
subset of $\tilde\R^2$. Denote the time-scaled trajectories of
$\tilde\flow$ by
\[
\tilde F_T(\mathcal{X},\omega) :=\bigcup_{\tilde x \in ({1}/{r})\mathcal{X}} \biggl\{[0,1] \ni t \mapsto
\frac{1}{T}\tilde \flow_{0,tT}(\tilde x,\omega) \biggr\}.
\]
%
One can easily deduce from \rref{thm:holger1}, using the definition of
$\tilde\flow$, that the Euclidean radius of the unit ball of the
stable norm defined via $\tilde\flow$ in $\tilde\R^2$ is $\tilde K
= r K$. Thus it follows from \rref{eq:mainthmu} and \rref
{eq:mainthml} applied to $\tilde\flow$ that
\begin{eqnarray*}
\lim_{T \to\infty} \mathbf{P}\bigl(d_H(\tilde F_T(\mathcal{X}),
\Lip
(\tilde K)) > \eps\bigr) = 0.
\end{eqnarray*}
By definition of $\tilde F_T(\mathcal{X})$ one sees
that this convergence also holds for the set $\tilde
F_{T/r^2}(\mathcal{X})$
, by definition of $\tilde\flow$,
\[
 F_T(\mathcal{X}) = \frac{1}{r} \tilde F_{T/r^2}(\mathcal{X}) \to \frac{1}{r} \Lip(\tilde  K) = \Lip(K),
\]
%
where convergence is meant in the Hausdorff distance in probability.
This proves the assertion for any nontrivial compact connected set
$\mathcal{X}\subseteq\R^2$.
\end{pf*}

%
\begin{appendix}\label{appm}
\section{An estimate on the covariance function}

One of the general assumptions for stochastic flows is a Lipschitz
property of the derivative of the quadratic variation of the difference
$M(t,x) - M(t,y)$, where $M$ denotes the generating martingale field of
the flow. In case of IBFs this property is achieved by an estimate of
the second derivative of the covariance functions. The following proof
is due to Scheutzow.

%
\begin{lemma} \label{lem:kappa}
Let $\flow$ be an IBF with generating isotropic Brownian field $M$.
The function $\mathcal{A}(t,x,y) := \frac{\dx}{\dx t}\langle M(\cdot
,x) -M(\cdot,y)\rangle_t$ satisfies for all $t \geq0$, $x,y \in\R
^2$, the inequality
\begin{eqnarray*}
\|\mathcal{A}(t,x,y)\| \leq\max\{\beta_L;\beta_N\}| x-y|^2,
\end{eqnarray*}
where $\beta_L$ and $\beta_N$ are as in Section~\ref{sec:IBF}, and
$\|\cdot\|$ denotes the spectral norm on $\R^{2\times2}$.
\end{lemma}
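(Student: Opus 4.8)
The plan is to reduce the matrix inequality to two scalar inequalities for the longitudinal and transverse covariance functions, and to prove those via the spectral (Bochner) representation of positive definite functions.

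First I would make $\mathcal A$ explicit. Since $M$ is a mean-zero Gaussian martingale field with $\E{\sprod{M(t,x),\xi}\sprod{M(s,y),\eta}} = (s\wedge t)\sprod{b(x-y)\xi,\eta}$, the $\R^{2\times2}$-valued quadratic covariation of the continuous Gaussian martingale $t\mapsto M(t,x)-M(t,y)$ is deterministic and linear in $t$; expanding it and using $b(0)=\id$ together with the fact that, from its explicit form, $b(\cdot)$ is symmetric and even, one obtains
\[
\mathcal A(t,x,y) \;=\; 2b(0) - b(x-y) - b(y-x) \;=\; 2\bigl(\id - b(x-y)\bigr),
\]
which in particular does not depend on $t$. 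Writing $z:=x-y$, $r:=\abs z$, and letting $P:=zz^T/r^2$ and $Q:=\id-P$ be the orthogonal projections onto $\R z$ and its orthogonal complement, the formula $b(z) = B_L(r)P + B_N(r)Q$ yields $\id - b(z) = (1-B_L(r))P + (1-B_N(r))Q$. Hence $\mathcal A(t,x,y)$ is symmetric with eigenvalues $2(1-B_L(r))$ and $2(1-B_N(r))$, and — the case $z=0$ being trivial — its spectral norm equals $2\max\{1-B_L(r);\,1-B_N(r)\}$. It therefore suffices to prove $0\le 1-B_L(r)\le \tfrac12\beta_L r^2$ and $0\le 1-B_N(r)\le\tfrac12\beta_N r^2$ for every $r\ge0$.

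For these scalar bounds I would use that $b_{11}$ is a continuous positive definite function on $\R^2$, being the covariance function of the Gaussian field $x\mapsto M^{(1)}(1,x)$; consequently its restrictions to the coordinate axes, namely $B_L(r)=b_{11}(re_1)$ and $B_N(r)=b_{11}(re_2)$, are continuous positive definite functions on $\R$ with $B_L(0)=B_N(0)=1$. By Bochner's theorem there is a probability measure $\nu_L$ on $\R$ with $B_L(r)=\int_\R\cos(r\xi)\,\nu_L(\dx\xi)$, and differentiating twice at $0$ identifies $\int_\R\xi^2\,\nu_L(\dx\xi) = -B_L''(0)=\beta_L<\infty$; likewise for $B_N$ with a measure $\nu_N$ of second moment $\beta_N$. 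Using $0\le 1-\cos s\le\tfrac12 s^2$ for all $s\in\R$,
\[
0 \;\le\; 1-B_L(r) \;=\; \int_\R\bigl(1-\cos(r\xi)\bigr)\,\nu_L(\dx\xi) \;\le\; \frac{r^2}{2}\int_\R\xi^2\,\nu_L(\dx\xi) \;=\; \frac{\beta_L}{2}r^2,
\]
and the identical estimate holds for $B_N$. Combining with the previous paragraph, $\norm{\mathcal A(t,x,y)} = 2\max\{1-B_L(r);1-B_N(r)\} \le \max\{\beta_L;\beta_N\}r^2 = \max\{\beta_L;\beta_N\}\abs{x-y}^2$.

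The only steps that need care are two standard facts: that the quadratic variation of $M(\cdot,x)-M(\cdot,y)$ really is the deterministic linear function of $t$ written above (which follows from the prescribed covariance structure of the Gaussian martingale field, continuous Gaussian martingales having deterministic bracket), and that a positive definite function twice differentiable at the origin has spectral measure with finite second moment equal to $-\phi''(0)$, so that the differentiation of the Bochner representation is legitimate. Neither is a genuine obstacle; the remainder is bookkeeping with the explicit isotropic covariance tensor.
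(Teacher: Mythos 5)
Your proof is correct. The first half --- identifying $\mathcal{A}(t,x,y)=2\bigl(b(0)-b(x-y)\bigr)$ from the covariance structure of the Gaussian martingale field and reading off the eigenvalues $2(1-B_L(r))$ and $2(1-B_N(r))$ from the decomposition $b(z)=B_L(r)P+B_N(r)(\id-P)$ --- is exactly the paper's reduction (the paper quotes van Bargen's Lemma~1.6 for the eigenstructure, you derive it directly from the explicit isotropic form of $b$; either is fine). Where you genuinely diverge is the scalar estimate $1-B(r)\le\tfrac12(-B''(0))r^2$: the paper realizes $b_{11}$ as the covariance of a stationary differentiable Gaussian field $U$, applies Cauchy--Schwarz to get $B_L''(r)=-\E{U_1'(re_1)U_1'(0)}\ge B_L''(0)$, and concludes by a second-order Taylor expansion (the first-order term vanishing since $B_L$ is even); you instead invoke Bochner's theorem for the even positive definite functions $r\mapsto b_{11}(re_1)$ and $r\mapsto b_{11}(re_2)$ and use $0\le 1-\cos s\le \tfrac12 s^2$ together with the identification $\int\xi^2\,\nu(\dx\xi)=-B''(0)$. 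Both are legitimate. Your route needs only continuity, positive definiteness and twice differentiability at the origin (the moment identification you flag is the standard characteristic-function fact, and under the paper's standing assumption $b\in C^\infty$ it is immediate), and it has the small bonus of giving $1-B_L(r)\ge 0$ and $1-B_N(r)\ge 0$ explicitly, which is what justifies dropping absolute values when passing from eigenvalues to the spectral norm --- a point the paper leaves implicit. The paper's argument, in exchange, establishes the slightly stronger pointwise statement $B''(r)\ge B''(0)$ for all $r\ge 0$ before Taylor expanding.
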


\begin{pf}
Observe that, by definition of the covariance tensor, we have
\begin{eqnarray*}
\mathcal{A}(t,x,y) = 2\bigl(b(0) - b(x-y)\bigr).
\end{eqnarray*}
According to~\cite{vB10}, Lemma 1.6, $x$ is an eigenvector of $b(x)$
to the eigenvalue $B_L(| x|)$, and any vector $x^\bot\neq0$
perpendicular to $x$ is an eigenvector of $b(x)$ to the eigenvalue
$B_N(| x|)$. Since the matrix $\mathcal{A}(t,x,y)$ is symmetric, we have
%
%
\begin{eqnarray} \label{eq:normOfA}
\|\mathcal{A}(t,x,y)\| &=& \bigl\|2\bigl(b(0) - b(x-y)\bigr)\bigr\|\nonumber
\\[-8pt]
\\[-8pt]
&=& 2 \max\{1 - B_L(| x-y|);1 - B_N(| x-y|)\}.
\nonumber
\end{eqnarray}
Now consider an $\R^2$-valued centered Gaussian process $U(x)$, $x \in
\R^2$, with covariances $\mathbf{E}[U_i(x)U_j(y)] = b_{ij}(x-y)$ for
$i,j \in
\{1,2\}$. Then by stationarity and Schwartz's inequality, we have for
$r \geq0$,
\begin{eqnarray*}
B''(r)
&=& \lim_{h \to0} \lim_{\delta\to0} \mathbf{E}\biggl[\frac{U_1(he_1) -
U_1(0)}{h}\frac{U_1(-(r + \delta)e_1) - U_1(-re_1)}{\delta}\biggr]\\
&=& -\mathbf{E}[U_1'(re_1)U_1'(0)] \geq-\mathbf{E}[U_1'(0)^2] = B_L''(0).
\end{eqnarray*}
By Taylor's expansion (\cite{bax86}, Section 2), for each $r > 0$,
there exists some $\theta\in(0,r)$ such that
\begin{eqnarray*}
B_L(r) = B_L(0) + \frac{1}{2}B_L''(\theta)r^2 \geq1 + \frac
{1}{2}B_L''(0)r^2 = 1 - \frac{\beta_L}{2}r^2.
\end{eqnarray*}
The estimate on $B_N$ follows in the same way, so from \rref
{eq:normOfA} we get
\begin{eqnarray*}
\|\mathcal{A}(t,x,y)\| \leq\max\{\beta_L;\beta_N\}| x-y|^2.
\end{eqnarray*}
\upqed
\end{pf}

\section{Chaining at work}

The following theorem is basically Theorem 2.1 of~\cite{sch09}. It
provides an upper bound for the probability that the image of a ball,
which is exponentially small in $T$, attains a fixed diameter up to
time $T$.

%
\begin{theorem} \label{thm:chaining}
Suppose there exist $\Lambda\geq0, \sigma> 0$ such that for each $x,
y \in\R^d$, there exists a standard Brownian motion $W$ such that
\begin{eqnarray*}
| \flow_t(x) - \flow_t(y)| \leq| x-y| \exp(\Lambda t + \sigma
W^*_t), \qquad t \geq0
\end{eqnarray*}
where $W^*_t := \sup_{0 \leq s \leq t} W_s$. Define for $\gamma> 0$
\begin{eqnarray*}
I(\gamma) :=
\cases{\displaystyle
\frac{(\gamma-\Lambda)^2}{2\sigma^2} ,&\quad if $\gamma\geq
\Lambda+ \sigma^2d$,\vspace*{4pt}\cr\displaystyle
d\biggl(\gamma- \Lambda- \frac{1}{2}\sigma^2d\biggr) ,&\quad if $\displaystyle
\Lambda+
\frac{1}{2}\sigma^2d \leq\gamma\leq\Lambda+ \sigma^2d$,\cr
\displaystyle
0 ,&\quad if $\displaystyle\gamma\leq\Lambda+ \frac{1}{2}\sigma^2d$.
}
\end{eqnarray*}
Then, for all $u > 0$, we have
\begin{eqnarray*}
\limsup_{T \to\infty} \frac{1}{T} \sup_{\mathcal{X}_T} \log
\mathbf{P}\Bigl(\sup_{x,y \in\mathcal{X}_T} \sup_{0 \leq t \leq T} |
\flow_t(x) - \flow_t(y)| \geq u\Bigr) \leq-I(\gamma),
\end{eqnarray*}
where $\sup_{\mathcal{X}_T}$ means that we take the supremum over all
cubes $\mathcal{X}_T$ in $\R^d$ with side length $\exp(-\gamma T)$.
\end{theorem}

\begin{pf}
The theorem can be proved via Kolmogorov's continuity theorem using
the explicit probabilistic upper bound for the modulus of continuity.
This proof and four others can be found in~\cite{sch09}, Chapter 2.3.
\end{pf}
\end{appendix}

\section*{Acknowledgments}
The author gratefully thanks Michael Scheutzow, Holger van Bargen and
Simon Wasserroth from TU Berlin for fruitful discussions.


%

\printaddresses

\end{document}